\def\antiddot{\mathinner{\mkern1mu\raise1pt\vbox{\kern7pt\hbox{.}}\mkern2mu
        \raise4pt\hbox{.}\mkern2mu\raise7pt\hbox{.}\mkern1mu}}
\newcommand{\FF}{{\mathbb F}}
\newcommand{\PP}{{\mathbb P}}
\newcommand{\QQ}{{\mathbb Q}}
\newcommand{\s}{\mathcal}
\newcommand{\cO}{{\s O}}
\newcommand{\cF}{{\s F}}
\newcommand{\punkt}{\hspace{-.3ex}\raise.15ex\hbox to1ex{\Huge.}}
\def \fix#1 {{\hfill\break \bf (( #1 ))\hfill\break}}
\DeclareMathOperator{\Spec}{Spec}
\DeclareMathOperator{\Hom}{Hom}
\newtheorem{theorem}{Theorem}[section]
\newtheorem{lemma}[theorem]{Lemma}
\newtheorem{proposition}[theorem]{Proposition}
\newtheorem{corollary}[theorem]{Corollary}
\theoremstyle{definition}
\newtheorem{remark}[theorem]{Remark}
\def\e{{\epsilon}}
\def\AA{{\mathbb A}}
\def\PP{{\mathbb P}}
\def\P{{\mathbb P}}
\def\QQ{{\mathbb Q}}
\def\FF{{\mathbb F}}
\def\cF{{\cal F}}
\def\cH{{\cal H}}
\def\fix#1{{\bf ***} #1 {\bf ***}}
\def\CI{{\mathcal I}}
\def\CH{{\mathcal H}}
\def\CCH{{\mathcal {HC}}}
\def\CO{{\mathcal O}}
\def\CT{{\mathcal T}}
\def\CHom{{\mathcal Hom}}
\def\Spec{{{\rm Spec}\,}}
\def\cone{{{\rm cone}\,}}
\def\Ddots{\mathinner{\mkern1mu\raise\p@
\vbox{\kern7\p@\hbox{.}}\mkern2mu
\raise4\p@\hbox{.}\mkern2mu\raise7\p@\hbox{.}\mkern1mu}}
\def\Ddots{\mathinner{\mkern1mu\raise\p@
\vbox{\kern7\p@\hbox{.}}\mkern2mu
\raise4\p@\hbox{.}\mkern2mu\raise7\p@\hbox{.}\mkern1mu}}
\newdimen\x \x=12pt
\date{}
\title{Twenty Points in $\PP^{3}$}
\author{David Eisenbud, Robin Hartshorne, and Frank-Olaf Schreyer
\footnote{This paper reports on work done during the Commutative Algebra Program, 2012-13,
at MSRI. We are grateful
to MSRI for providing such an exciting environment, where a chance meeting
led to the progress described here. The first author is also grateful to the 
National Science Foundation for partial support during this period.}}
\begin{document}

\maketitle

\begin{abstract}
Using the possibility of computationally determining points on a finite cover of a unirational variety over a finite field, we determine all possibilities for direct Gorenstein linkages between general sets of points in $\PP^{3}$
over an algebraically closed field of characteristic 0. As a consequence we show that a general set of $d$ points is glicci (that is, in the Gorenstein linkage
class of a complete intersection) if $d\leq 33$ or $d=37,38$. Computer algebra plays an essential role in the proof. The case of
20 points had been an outstanding problem in the area for a dozen years~\cite{Hartshorne2001}.
\end{abstract}


\section*{Introduction} The theory of liaison (linkage) is a powerful tool in the theory of curves in $\P^{3}$ with applications, for example, to the question of the unirationality of the moduli spaces of curves (for example~\cite{Chang-Ran,Verra,Schreyer}. One says that two curves $C,D\subset \P^{3}$ (say, reduced and without common components) are directly linked if their union is a complete intersection, and \emph{evenly linked} if there is a chain of curves $C= C_{0}, C_{1}, \dots, C_{2m}=D$ such that $C_{i}$ is directly linked to $C_{i+1}$ for all $i$. The first step in theory is the result of Gaeta that any two arithmetically Cohen-Macaulay curves are evenly linked, and in particular are \emph{in the linkage class of a complete intersection}, usually written \emph{licci}. Much later Rao (\cite{Rao1978}) showed that even linkage classes are in bijection with graded modules of finite length up to shift, leading to an avalanche of results (reported, for example in~\cite{Migliore1998} and \cite{Martin-Deschamps-Perrin}). However, in codimension $>2$ linkage yields an equivalence relation that seems to be very fine, and thus not so useful; for example the scheme consisting of the 4 coordinate points in $\PP^{3}$ is not licci. 

A fundamental paper of Peskine and Szpiro~\cite{PS1974} laid the modern foundation for the theory of linkage. They observed that some of the duality used in liaison held more generally in a Gorenstein context, and Schenzel \cite{Schenzel} introduced a full theory of Gorenstein liaison. We say that two schemes $X,Y\subset \PP^{n}$ that are reduced and without common components are \emph{directly Gorenstein-linked} if their union  is  arithmetically Gorenstein (for general subschemes the right definition is that $\CI_{G}:\CI_{X}=\CI_{Y}$ and $\CI_{G}:\CI_{Y}=\CI_{X}$). We define \emph{Gorenstein linkage} to be the equivalence relation generated by this notion. This does not change the codimension 2 theory, since, by a result of Serre (\cite{Serre}) every Gorenstein scheme of codimension 2 is a complete intersection. Moreover, the behavior of Gorenstein linkage in higher codimensions seems closer to that in codimension 2. For example, it is not hard to show that  a set of 4 points of $\PP^{3}$ is linked via Gorenstein schemes to a complete intersection---that is, in now-standard terminology, it is \emph{glicci} (see for example \cite{KMMNP}.)

For the theory in higher codimension to be similar to the codimension 2 theory, one hopes that every arithmetically Cohen-Macaulay scheme 
 is glicci, and, in particular: \emph{every} finite set of points in $\PP^{3}$ should be glicci. This was verified by the second author in 2001(see~\cite{Hartshorne2001}) for general sets of $d$ points in $\PP^{3}$ with $d<20$, and he proposed the case of 20 general points in $\PP^{3}$ as a ``first candidate counterexample''. The question of of whether 20 general points in $\PP^{3}$ is glicci has remained open since then.

\begin{theorem}\label{main theorem}
Over an algebraically closed field of characteristic zero, a scheme consisting of $d$ general points in $\PP^{3}$ is glicci when $d\leq 33$ and also when $d=37$ or $d=38$.
\end{theorem}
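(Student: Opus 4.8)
The plan is to prove the theorem by induction on $d$, exploiting the fact that direct Gorenstein linkage is a symmetric relation and that glicci is the equivalence class of a complete intersection: if $X$ is directly Gorenstein-linked to $Y$, then $X$ is glicci if and only if $Y$ is. Since a complete intersection is glicci by definition, it suffices to connect a general set of $d$ points, through a chain of direct Gorenstein links between general point sets, to a configuration already known to be glicci. The base cases $d<20$ are supplied by Hartshorne's earlier result quoted in the introduction. Concretely, I would attempt to directly Gorenstein-link a general set $X$ of $d$ points to a general set $Y$ of $d' < d$ points, so that the induction hypothesis applies to $Y$ and the chain closes.

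First I would pin down the numerical constraints. A zero-dimensional arithmetically Gorenstein subscheme $G\subset\PP^{3}$ has codimension $3$, hence by Buchsbaum--Eisenbud is defined by the Pfaffians of a skew matrix, and its $h$-vector is a symmetric SI-sequence. If $X\subset G$ with $G$ of socle degree $s$ and degree $g$, the residual $Y=\CI_{G}:\CI_{X}$ has $h$-vector $h_{i}(Y)=h_{s-i}(G)-h_{s-i}(X)$ and degree $g-d$. Imposing that $h(X)$ be the generic $h$-vector of $d$ points and that $h(Y)$ be the generic $h$-vector of $d'=g-d$ points cuts out a short list of admissible symmetric $h$-vectors $h(G)$ for each $d$. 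For example, for $d=20$ the generic $h$-vector is $(1,3,6,10)$; the symmetric sequence $(1,3,6,10,6,3,1)$ is a valid Gorenstein $h$-vector of degree $30$, and it yields a residual with $h$-vector $(1,3,6)$, i.e.\ a would-be general set of $10$ points, which is glicci by the case $d<20$. The output of this step is, for each $d$, the list of candidate links $d\to d'$.

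The main obstacle is existence together with genericity: a numerically admissible $h(G)$ does not guarantee that a general $X$ actually lies on such a $G$, nor that the residual $Y$ is a \emph{general} set of points rather than a special configuration that merely shares the generic $h$-vector. I would encode ``$X$ lies on an arithmetically Gorenstein $G$ of the prescribed type whose residual is a general set of $d'$ points'' as a property $P$ of the configuration, and argue that the locus where $P$ fails is closed in the irreducible, unirational parameter space of $d$-point sets; then a single configuration satisfying $P$ forces the general one to satisfy it. To produce such an example I would work over a finite field $\FF_{q}$, where the space of pairs $(X,G)$ --- or of the auxiliary data cutting out $G$ --- is a finite cover of a unirational variety, so that explicit $\FF_{q}$-points can be found and tested by computer algebra: one computes $G$ from $X$, forms $Y=\CI_{G}:\CI_{X}$, and verifies that $Y$ is genuinely general (generic Hilbert function, expected Betti numbers, and that it in turn admits the inductively needed link). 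A successful verification over $\FF_{q}$, combined with spreading out and the openness of $P$, transfers the conclusion to characteristic zero.

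Carrying out this search for every $d\le 33$ and assembling the resulting chains gives the theorem in that range, while $d=37,38$ should require slightly longer or less obvious chains, possibly linking \emph{up} to more points before linking down, again reduced to a finite-field computation. I expect the genuinely hard point to be not any single linkage but the systematic determination of \emph{all} admissible links between general point sets, which both produces the chains used here and explains the gaps: for $d=34,35,36$ one should find that no direct Gorenstein link carries a general set of $d$ points to a general set of fewer points already known to be glicci, so the inductive chain cannot be closed by this method. Verifying that the finite-field examples are really general --- so that the specialization argument is valid --- is the step most likely to demand care.
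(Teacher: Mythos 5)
Your overall architecture matches the paper's---classify the admissible symmetric $h$-vectors, search for examples over a finite field, lift to characteristic zero, and walk a chain of links back to small $d$---but there is a genuine gap at the central step, and it is exactly the step you flag as ``most likely to demand care.'' You assert that the locus where your property $P$ (``$X$ lies on an arithmetically Gorenstein $G$ of the prescribed type with general residual'') fails is closed, so that one example forces the general $X$ to satisfy $P$. This is false: the set of $X$ lying on some such $G$ is the image of the projection $\CCH_{X\subset G}\to \CH_{X}$, hence merely constructible, and its closure can be a \emph{proper} subvariety of $\CH_{d}$. Lying on a Gorenstein scheme with a prescribed $h$-vector imposes conditions on $X$; it is a closed-type, not open-type, condition. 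The paper's table of nine numerically admissible cases that are \emph{not} bi-dominant (e.g.\ $7$ points on a Gorenstein scheme with $h$-vector $\{1,3,3,3,1\}$, which is forced to lie on a twisted cubic containing at most $6$ general points) shows concretely that examples can exist while the general configuration fails $P$. What the paper does instead is prove dominance of the projection by a deformation-theoretic criterion (Lemma~\ref{defoLemma}, Theorem~\ref{dominance}, Corollary~\ref{cordom}): at a single point $(X\subset G)$ found over $\FF_{p}$ one verifies the tangent-space identity $\dim\Hom_{S}(I_{G},I_{X}/I_{G})_{0}=\dim_{G}\CCH_{G}-3d$, which simultaneously certifies smoothness of $\CCH_{X\subset G}$ and surjectivity of the tangent map to $\CH_{X}$, hence dominance. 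Without some such argument your single finite-field example proves nothing about a general set of $d$ points.

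A second, related problem is your plan to ``verify that $Y$ is genuinely general.'' Generality is not a checkable property of one point over $\FF_{q}$; having generic Hilbert function and expected Betti numbers does not preclude $Y$ from lying in a proper closed subset relevant to the next link in the chain. The paper resolves this by making the correspondence \emph{bi-dominant}: one runs the same tangent-space test for the residual projection $\CCH_{Y\subset G}\to\CH_{e}$, and bi-dominance then guarantees that a general $X$ links to a $Y$ in \emph{any} prescribed dense open subset of $\CH_{e}$---in particular to one that is itself glicci by induction. Note also that the computation runs in the opposite direction from what you describe: one cannot compute $G$ from a general $X$ (the paper emphasizes this non-constructivity); one chooses a random $G$ via its skew-symmetric Buchsbaum--Eisenbud matrix and extracts a degree-$d$ subscheme by factoring a projection to a line.
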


Further, we determine all pairs of numbers $d,e$ such that there exist ``bi-domi\-nant'' direct Goren\-stein linkage correspondences between the smoothing components (that is, the components containing reduced sets of points) of the Hilbert schemes of degree $d$ subschemes and degree $e$ finite subschemes of $\PP^{3}$---see Section~\ref{section:sketch} for a precise statement. All such bi-dominant correspondences are indicated by the edges in the graph shown in Figure~\ref{Fig1-1}. Our approach makes essential use of computation, done in {\it Macaulay2}~\cite{M2} by the package \href{http://www.math.uni-sb.de/ag/schreyer/home/computeralgebra.htm}{\emph{GlicciPointsInP3}} \cite{ES-glicci}. It passes by way of characteristic $p>0$, and we get the same results in all the characteristics we have tested. 

\begin{figure}
\small 
\hskip .25in {\includegraphics[height=450pt] {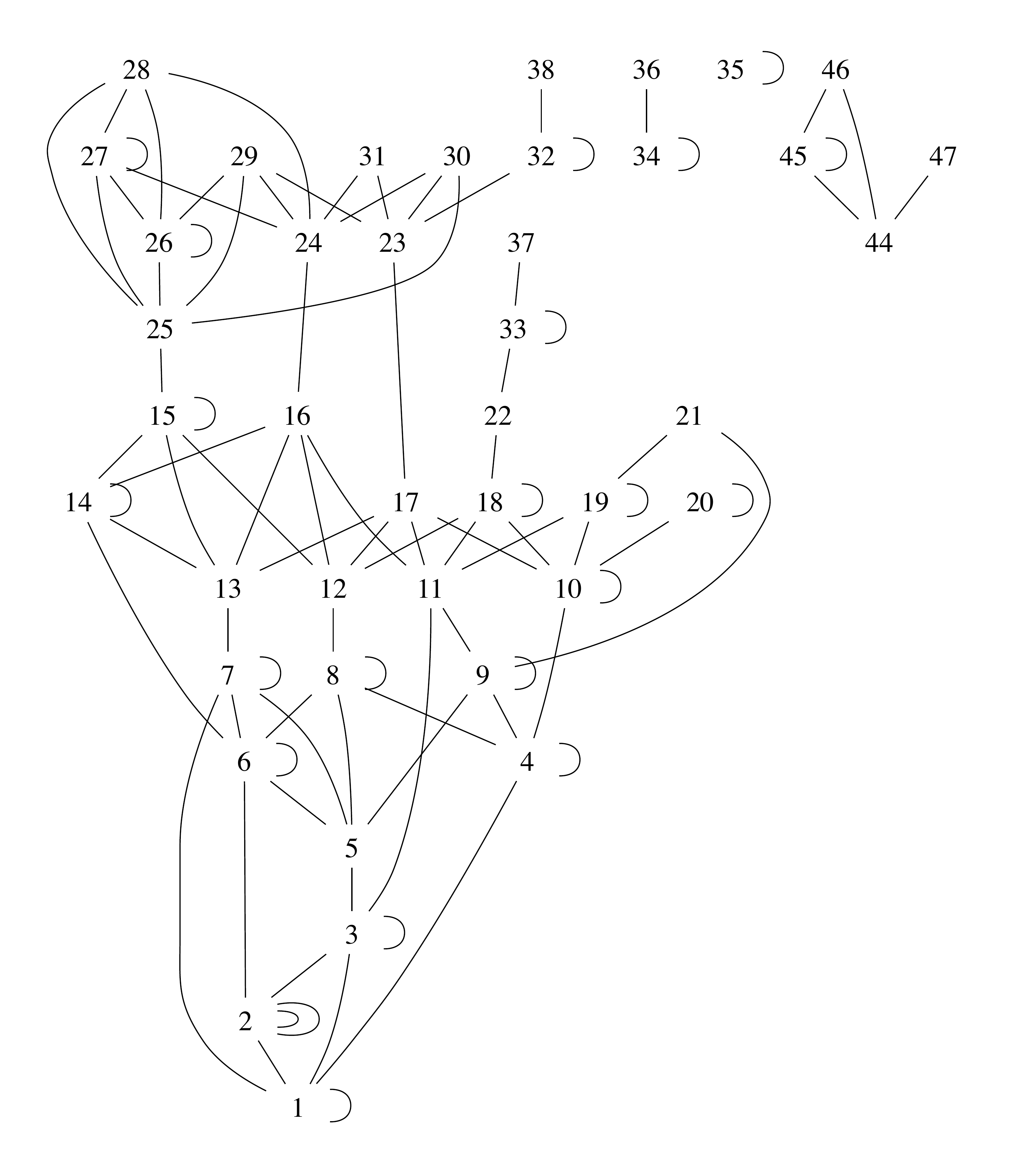}}
\caption{The graph shows all bi-dominant Gorenstein direct linkage correspondences for collections of points in $\PP^{3}$ \label{Fig1-1}
over an algebraically closed field of characteristic zero. 
A vertex $d$ represents a set of $d$ points, and an edge\ 
$d$---$e$ represents linkage by an arithmetically Gorenstein set of $d+e$ points with a particular $h$-vector. Thus two edges are shown where two different h-vectors are possible.}
\vskip15pt
\end{figure}

Paradoxically, though our method proves that almost all sets of points of the given degrees is glicci, and uses computation, it is not constructive:  we prove that a general set of 20 points in $\PP^{3}$ is linked to a set of 10 points by a Gorenstein scheme of length 30. But if you give us a set of 20 points, we have no way of producing a Gorenstein scheme of length 30 containing it. 

As far as we know, our paper is the first to make explicit use of linkages by Gorenstein that are
not divisors of the form $mH-K$ on some arithmetically Cohen-Macaulay scheme of one larger dimension (see Section 5, below).
\medskip

\noindent{\bf Thanks:} We are grateful to Peter Cameron, Persi Diaconis, Ira Gessel, Robert Guralnick,  Brendan Hassett and Yuri Tschinkel for helping us to understand various aspects of this paper.

\section{Basic definitions and outline of the argument}\label{section:sketch}

For simplicity, we work throughout this paper over a perfect (but not necessarily algebraically closed) field $k$. Given a closed subscheme $X\subset \PP^{n}$,
we write $S_{X}$ for the homogeneous coordinate ring of $X$ and $\cO_{X}$ for the structure sheaf of $X$. Similarly, we write $I_{X}$ and $\CI_{X}$ for the homogeneous ideal and the ideal sheaf of $X$.

 We write $\CH_{X}$ for the union of the components of the Hilbert scheme of subschemes of $\PP^{n}$ that contain $X$; and we write $\CCH_{X}$ for the union of the components of the Hilbert scheme of cones in $\AA^{n+1}$
that contain the cone over $X$ (see \cite{Haiman-Sturmfels}). For example, if $X$ is a reduced set of points then $\CH_{X}$ is smooth at $X$ of dimension $nd$,
but $\CCH_{X}$ may be more complicated.

The reason for considering these cones is the following. While, for a set of general points $X\subset \PP^{n}$, the deformation theory and Hilbert schemes $\CH_{X}$ and $\CCH_{X}$ are naturally isomorphic, this is not so for other subschemes. Also, if $G$ is an arithmetically Gorenstein set of points in $\PP^{n}$, then deformations of $G$ will not in general be arithmetically Gorenstein. On the other hand, the deformations of the cone over $G$ in $\CCH_{G}$ are again cones over arithmetically Gorenstein schemes, and this is the way one normally defines the scheme structure (called PGor$(H)$
in \cite{Kleppe1998}) on the subset of the Hilbert scheme of points in $\PP^{n}$ consisting of arithmetically Gorenstein subschemes. In particular, the tangent space to $\CCH_{G}$ at the point corresponding to $G$ is $\Hom_{S}(I_{G}, S/I_{G})_{0}$.

We say that a finite scheme of degree $d$ in $\PP^{n}$ has \emph{generic Hilbert function}
if the maps $H^{0}(\CO_{\PP^{n}}(d)) \to H^{0}(\CO_{X}(d))$
are either injective or surjective for each $d$. This is the case, for example, when $X$ consists of $d$ general reduced points. In any case, such a generic Hilbert function is determined by $d$ alone. Moreover, any nearby set of points will also have generic Hilbert function, so the Hilbert scheme $\CH_{X}$ is naturally isomorphic, in a neighborhood of $X$, to the Hilbert scheme $\CCH_{X}$,
and in particular the latter is also smooth and of dimension $nd$ at $X$.

By~\cite{BE}, any arithmetically Gorenstein scheme $G$ of codimension 3 in $\PP^{n}$ is defined by the Pfaffians of a skew symmetric matrix of homogeneous forms. From this description and Macaulay's growth conditions Stanley~\cite{Stanley1978} derived a characterization of the possible Hilbert functions of such schemes, and \cite{Diesel,Kleppe1998} showed that the family $\CCH_{G}$ is smooth and irreducible. To simplify our discussion, we will use the term \emph{Gorenstein Hilbert function} in this paper to refer to a Hilbert function of some arithmetically Gorenstein scheme of codimension 3 in $\PP^{3}$.

Given two positive integers $d,e$, we ask whether there exists a a zero-dimen\-sional arithmetically Gorenstein scheme $G$ of degree $d+e$ in $\P^{3}$ that ``could'' provide a direct Gorenstein linkage between
a set $X$ of $d$  points with generic Hilbert function and a set $Y$ of $e$ points with generic Hilbert function. It turns out that there is only a finite number of possibilities for the Hilbert function of such a scheme, and we can list them. 

If $G$ is a finite reduced Gorenstein scheme in $\PP^{3}$ containing a subscheme $X$ with complement $Y$, we let
$$
\CCH_{X \cup Y = G} =\{ (X',Y',G') \in \CCH_{X} \times \CCH_{Y} \times \CCH_G | X' \cup Y' = G' \hbox{ and } X' \cap Y' = 0 \}.
$$
be the incidence correspondence, and we ask when $\CCH_{X\cup Y = G}$ projects dominantly onto $\CH_{X}$ and $\CH_{Y}$ via $\CCH_{X}$ and $\CCH_{Y}$; in this
case we say that the correspondence is bi-dominant. Of course for this to happen, the family $\CCH_{G}$ must have dimension
at least $\max(3d,3e)$. We will show that no bi-dominant correspondence is  possible unless the numbers $d,e$ are both $\leq 47$. 

Given $d,e$ and an appropriate Gorenstein Hilbert function, we search for an example of a reduced Gorenstein scheme $G\subset \P^{3}$ with the given Hilbert function such that $G$ contains a degree $d$ subscheme $X$ with generic Hilbert function whose complement
$Y$ also has generic Hilbert function. We do not know how to find such examples directly in characteristic 0.  It is perhaps the most surprising part of this paper that one can find examples of the type above by computer over a finite field; We can then lift the examples of pairs $X\subset G$ to characteristic zero. 
The success of the method in finite characteristic is based on the observation that, given integers $d,e$, a random polynomial of degree $d+e$ over a large finite field ``often'' has a factor of degree $e$.  This phenomenon is explored in Section~\ref{splittings}.

Set 
$$
\CCH_{X\subset G}=\{(X', G')\in \CCH_{X} \times \CCH_{G}\mid X'\subset G' \}.
$$ 
Near a triple $(X,Y,G)$ as above, the natural projection $\CCH_{X \cup Y = G}\to \CCH_{X\subset G}$ is an isomorphism, with inverse
defined from the family, over $\CCH_{X\subset G}$, of residual subschemes to $X$ in $G$, and the universal property of 
$\CCH_{X \cup Y = G}$.

We use a deformation-theoretic argument given in Section~\ref{defo}, together with machine computation, to test whether $\CCH_{X\subset G}$ is smooth at the pair
$(X,G)$ and projects dominantly onto $\CH_{X}$, and similarly for $Y$. If our example passes both these tests, it follows
that the incidence correspondence $\CCH_{X\cup Y = G}$ is bi-dominant.

If $\CCH_{X \cup Y =G}$ is bi-dominant then for any dense open set $V \subset \CCH_Y$ there exists a dense open set $U \subset \CCH_X$ such that each point $X' \in U$ can be directly Gorenstein-linked to a point $Y' \in V$. In particular, if a general point $Y' \in \CCH_Y$ is glicci, then a general point $X' \in \CCH_X$ is glicci as well.

Figure~\ref{Fig1-1} presents a graph (produced by the program Graphviz) of all bi-dominant correspondences. A node numbered $d$ is connected to a node numbered 
$e$ if, for a general set of $d$ points $X$, there exists a Gorenstein scheme of degree $d+e$ 
containing $X$ with complement $Y$ such that the scheme $\CCH_{X\cup Y =G}$ dominates both $\CCH_{X}$
and $\CCH_{Y}$. By the remark above, general set of $d$ points is glicci if $d$ lies in the connected component of $1$ in this graph.

\medbreak
{\noindent\bf 34 Points?} The degree of the smallest collection of general points in $\PP^{3}$ that is still not known to be glicci is  34. Here is a possible attack on this case: a general set of $34$ points can be linked (using a five-dimensional family of Gorenstein schemes) to a five-dimensional family $\cF$ of sets of 34 points. On the other hand the schemes in $\cH_{34}$ that can be directly Gorenstein linked to 21 points form a subfamily of codimension only $3$. Hence it is plausible that the family $\cF$ meets this stratum. If this does indeed happen, then, since we know that a set of 21 general points is glicci, it would follow that a set of 34 general points is glicci. (One could also link back and forth between sets of 34 and 36 points; or some combination of both.) To make this work would seem to require a good compactification of $\CCH_{G}$, on which one could do intersection theory. 

\section{Split Polynomials over Finite Fields}\label{splittings} 

In this section we describe the  philosophy leading us to believe that the computations underlying this paper could be successful. In a nutshell:  $\QQ$-rational points on varieties over $\QQ$ are very hard to find, but $\FF_{p}$-rational points on varieties over a finite field $\FF_{p}$ are much more accessible. No new result is proven in this section, and none of the  results mentioned will be used in the rest of the paper.

By a result of Buchsbaum and Eisenbud~\cite{BE}, the homogeneous ideal of an arithmetically Gorenstein subscheme $G$ of codimension 3 in projective space has a minimal presentation matrix that is homogeneous and skew-symmetric. The degrees of the elements in this matrix (the \emph{degree matrix}) determine the Hilbert function of $G$. For the arithmetically Gorenstein schemes in an open dense subset of the Hilbert scheme, the degree matrix is also determined (up to permutation of the rows and columns) by the Hilbert function. 

Our method thus requires that we find an arithmetically Gorenstein scheme with given degree matrix that also contains  reduced components---and these must be sufficiently general so that the Hilbert function of the corresponding subscheme is ``generic''. We do this by choosing a \emph{random} skew symmetric matrix with appropriate degrees over a moderately large finite field. We then \emph{hope} that it contains
a reduced subscheme of the right degree; and we check whether it does by projecting (in a random direction) to a line and factoring the polynomial in 1 variable that corresponds to the subscheme of the line. Once we have such an example we can proceed with the computations of tangent spaces described in the next sections.

For instance, to show that a set of 20 points in $\PP^{3}$ is directly Gorenstein-linked to a set of 10 Gorenstein points, we will choose a random arithmetically Gorenstein, 0-dimensional scheme  $G\subset \PP^{3}$ such that the presentation matrix of $I_{G}$ is a $9\times 9$ skew-symmetric matrix of linear forms (this choice of a $9\times 9$ matrix of linear forms is determined by considerations to be described later). If $M$ is a sufficiently general matrix of this kind over a polynomial ring in four variables then the cokernel of $M$ will be the homogeneous ideal of a reduced 0-dimensional scheme of degree 30 in $\PP^{3}$. Now suppose that the ground field $k$ is a moderately large finite field, and we choose such a matrix randomly (say by choosing each coefficient of each linear form in the upper half of the matrix uniformly at random from $k$). What will be the chance that it contains a  subscheme of length 20 defined over $k$? That is, how many random examples should one expect to investigate before finding a good one? 

The answer was surprising to us: Taking $k$ to be a field with 10,007 elements, and making 10,000 random trials, we found that the desired subscheme occurs in 3868 examples---about 38\% of the time. For the worst case needed for this paper, where the Gorenstein scheme has degree 90 and the desired subscheme has degree 45, the proportion is about 17\% in our experiments.

The proportion of Gorenstein schemes of degree 30 that are reduced and have a subscheme of degree 20 defined over $k$ turns out to be quite close to the proportion of polynomials of degree 30 that have a factor
of degree 20 over $k$. That proportion can be computed explicitly as a rational function in the size of the ground field; for $|k| = 10,007$ it is approximately .385426. 

Such phenomena are quite general (see \cite{Katz}, Theorem 9.4.4). Thus, to consider applications of this random search technique it is worthwhile to know something about the proportion of polynomials of degree $n$ in 1 variable over a finite field $\FF_{q}$ that are square-free and have a factor of given degree $k$.

This proportion can be computed explicitly (for small n and k).
Gauss showed that the number of irreducible monic polynomials in $\FF_q[x]$ of degree $\ell$ is
$$
N(\ell,q)= \frac{1}{\ell}\sum_{d|\ell} \mu(\ell/d) q^{d}
$$
where $\mu$ denotes the M\"obius function. Thus the number of square free polynomials is of degree $n$ is
$$
\sum_{\lambda \vdash n} \prod_{i=1}^r{N(\lambda_i,q)\choose t_i}
$$
where $t_i$ denotes the frequency of $\lambda_i$ in the partition $\lambda=(\lambda_1^{t_1},\lambda_2^{t_2}, \ldots,\lambda_r^{t_r}) $. 
(This number, rather amazingly, can also be written as  $q^n-q^{n-1}$; for a simple proof see \cite{sq-free}.)

The number of square free polynomials
of degree $n$ with a factor of degree $k$ is
$$
A(n,k,q)=\sum_{\lambda \vdash n \atop \hbox{\scriptsize with subpartition of size }k} \prod_{i=1}^r{N(\lambda_i,q)\choose t_i}. 
$$ 
For small $n$ and $k$ the polynomial in $q$ can be evaluated explicitly. For example
$$
A(6,3,q)=\frac{29}{80} q^{6}-\frac{11}{16} q^{5}+\frac{5}{16} q^{4}-\frac{5}{16} q^{3}+\frac{13}{40} q^{2}
$$
and
$$
A(30,20,q)/q^{30} \doteq .385481   - .550631q^{-1}+ O(q^{-2}).
$$

Since
$$
 \lim_{q \to \infty} N(\ell,q)/q^\ell = \frac{1}{\ell}
 $$
  the relative size of the contribution of a partition $\lambda$ converges to
$$
\lim_{q \to \infty} \prod_{i=1}^r{N(\lambda_i,q)\choose t_i} /q^n = \frac{1}{\prod_{i=1}^r t_i!\lambda_i^{t_i}}=|C_\lambda|/n! ,
$$
which is also the relative size of the conjugacy class $C_\lambda$ in the symmetric group $S_n$. Thus the sum
$$
p(n,k)=\sum_{\lambda \vdash n \atop \hbox{\scriptsize with subpartition of size }k} |C_\lambda|/n!
$$
can serve as an approximation for $A(n,k,q)/q^n$ for large $q$.

For fixed $k$, Cameron (unpublished) proved that the limit $\lim_{n \to \infty} p(n,k)$ exists and is positive. For example,
$$\lim_{n \to \infty} p(n,1) = 1-exp(-1) $$
was established by Montmort around 1708~\cite{Montmort}. 

Indeed,  over a finite field $\FF$ with $q$ elements the fraction of polynomials with a root in $\FF$ is about $63\%$ nearly independently of $q$ and $n$. Experimentally we find that
$$
A(n,1,q)/q^n \doteq   .632121  - .81606q^{-1}+O(q^{-2}).
$$

\section{Deformation Theory}\label{defo}

We are interested in pairs of schemes $X\subset G$ such that the projection
$$\CCH_{X\subset G}\to \CCH_{X}
$$
 is dominant, meaning geometrically that each small deformation of $X$ is still contained in
 a small deformation of $G$ that is still arithmetically Gorenstein.
We will check this condition
by showing that the map of tangent spaces is surjective at some smooth point of
$\CCH_{X\subset G}$. 

We thus begin by recalling the construction of the tangent space to $\CCH_{X \subset G}$. Though the case of interest to us has to do with finite schemes in $\PP^{n}$, the (well-known) result is quite general:

\begin{lemma} \label{defoLemma}
Suppose that $X\subset G$ are closed subschemes of a scheme $Z$, and let
$$
T_{X/Z}=H^{0}{\mathcal Hom}_{Z}(\CI_{X},\CO_{X}), \quad T_{G/Z}=H^{0}\CHom_{Z}(\CI_{G}, \CO_{G})
$$
 be the tangent spaces to the functors of embedded
flat deformations of $X$ and $G$ in $Z$.  
The functor of pairs of embedded flat deformations of $X$ and $G$ in $Z$ that preserve the inclusion relation $X\subset G$ has Zariski tangent space $T_{(X\subset G)/Z}$ at $X\subset G$ equal to $H^{0}\CT_{(X\subset G)/Z}$,
where $\CT:=\CT_{(X\subset G)/Z}$ is defined by the fibered product diagram
$$
\begin{diagram} 
\CHom_{Z}(\CI_{G},\CO_{G})&\rTo&\CHom_{Z}(\CI_{G},\CO_{X})\\
\uTo&&\uTo\\
\CT &\rTo&\CHom_{Z}(\CI_{X},\CO_{X}).
\end{diagram}
$$
In particular, if the restriction map $H^{0}\CHom(\CI_{X},\CO_{X}) \to H^{0}\CHom(\CI_{G},\CO_{X})$
is an isomorphism then $T_{(X\subset G)/Z} \cong T_{G/Z}$.
\end{lemma}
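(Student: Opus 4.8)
This is the standard identification of the tangent space of a Hilbert-type functor with global homomorphisms out of the ideal sheaf, adapted to nested pairs, and I would prove it by working over the dual numbers $k[\e]$ with $\e^{2}=0$ throughout. First I would recall the dictionary for a single scheme. A first-order embedded deformation of $X$ in $Z$ is a sheaf of ideals $\widetilde{\CI}_{X}\subset \CO_{Z}\otimes_{k}k[\e]$, flat over $k[\e]$, reducing to $\CI_{X}$ modulo $\e$; flatness is equivalent to $\widetilde{\CI}_{X}\cap \e\CO_{Z}[\e]=\e\CI_{X}$, i.e. to exactness of $0\to \CI_{X}\xrightarrow{\cdot\e}\widetilde{\CI}_{X}\to \CI_{X}\to 0$. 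Lifting each $f\in\CI_{X}$ to $f+\e h_{f}\in\widetilde{\CI}_{X}$, the class $\phi_{X}(f):=\overline{h_{f}}\in\CO_{X}$ is well defined (the lift is unique modulo $\e\CI_{X}$) and $\CO_{Z}$-linear, and $\widetilde{\CI}_{X}$ is recovered from $\phi_{X}$. This gives the identification $T_{X/Z}=\Hom_{\CO_{Z}}(\CI_{X},\CO_{X})=H^{0}\CHom_{Z}(\CI_{X},\CO_{X})$, the first equality because any such map kills $\CI_{X}^{2}$. The same applies verbatim to $G$.

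The crux is to translate the inclusion condition. A deformation of the pair is a pair $(\widetilde{\CI}_{X},\widetilde{\CI}_{G})$ as above with $\widetilde{\CI}_{G}\subseteq\widetilde{\CI}_{X}$, which is exactly $\widetilde{X}\subset\widetilde{G}$ since $X\subset G$ means $\CI_{G}\subseteq\CI_{X}$. Let $\iota\colon\CI_{G}\hookrightarrow\CI_{X}$ and $\pi\colon\CO_{G}\twoheadrightarrow\CO_{X}$ be the natural maps. For $g\in\CI_{G}$ I would compare a lift $g+\e h^{G}_{g}\in\widetilde{\CI}_{G}$ with a lift $g+\e h^{X}_{g}\in\widetilde{\CI}_{X}$: nesting forces $\e(h^{G}_{g}-h^{X}_{g})\in\widetilde{\CI}_{X}\cap\e\CO_{Z}[\e]=\e\CI_{X}$, that is $\overline{h^{G}_{g}}=\overline{h^{X}_{g}}$ in $\CO_{X}$. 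Unwinding the definitions, this reads $\pi\circ\phi_{G}=\phi_{X}\circ\iota$ as sections of $\CHom_{Z}(\CI_{G},\CO_{X})$. Hence nested pairs of deformations correspond bijectively to pairs $(\phi_{G},\phi_{X})$ in the fibered product of $\phi_{G}\mapsto\pi\circ\phi_{G}$ and $\phi_{X}\mapsto\phi_{X}\circ\iota$, which are precisely the top and right maps of the stated square defining $\CT$.

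It then remains to pass to global sections and to read off the special case. Since $H^{0}$ is left exact it commutes with the fibered product (a kernel of a map into a product), so $H^{0}\CT$ is the fibered product of $T_{G/Z}=H^{0}\CHom_{Z}(\CI_{G},\CO_{G})$ and $H^{0}\CHom_{Z}(\CI_{X},\CO_{X})$ over $H^{0}\CHom_{Z}(\CI_{G},\CO_{X})$; by the previous paragraph this is exactly $T_{(X\subset G)/Z}$. The final assertion is then formal: if the right leg $H^{0}\CHom(\CI_{X},\CO_{X})\to H^{0}\CHom(\CI_{G},\CO_{X})$ is an isomorphism, then in a fibered product whose one leg is an isomorphism the opposite projection is an isomorphism, so each $\phi_{G}$ determines a unique compatible $\phi_{X}$ and projection to the first factor yields $T_{(X\subset G)/Z}\cong T_{G/Z}$.

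I expect the real content to sit in the middle step: checking, with the flatness relation $\widetilde{\CI}_{X}\cap\e\CO_{Z}[\e]=\e\CI_{X}$ invoked correctly, that scheme-theoretic nesting of the two deformed ideals is equivalent to the single identity $\pi\circ\phi_{G}=\phi_{X}\circ\iota$, and in particular that the correspondence is a genuine bijection rather than merely an injection—one must verify that every compatible pair $(\phi_{G},\phi_{X})$ reconstructs a pair of flat ideals that are actually nested. Everything else is the familiar tangent-space dictionary together with left-exactness of $H^{0}$ and the universal property of the fibered product.
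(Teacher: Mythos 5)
Your proposal is correct and follows essentially the same route as the paper: identify first-order embedded deformations with homomorphisms out of the ideal (via lifts $f+\e h_{f}$ of generators), and show that nesting of the deformed ideals is equivalent to the compatibility condition $\pi\circ\phi_{G}=\phi_{X}\circ\iota$ defining the fibered product, the paper doing this after reducing to the affine case with the explicit ideal $I_{\phi}=\{i+\e\phi(i)\}+\e I$. Your two-way check of nesting versus compatibility is exactly the paper's argument, and the passage to $H^{0}$ and the final claim are handled the same way.
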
 

\begin{proof} (See also \cite{Hartshorne2010} Ex. 6.8.) It suffices to prove the lemma in the affine case, so we suppose that
$Z = \Spec R$ and that  $X\subset G$ are defined by ideals $I\supset J$ in $R$.
The first order deformation of $I$ corresponding to a homomorphism $\phi: I\to S/I$ is the ideal
$$
I_{\phi}:=\{i+\e\phi(i)\mid i\in I\} + \e I \subset R[\e]/(\e^{2}),
$$
and similarly for $J_{\psi}$, so we have
$$
\CT = \{(\psi:J\to R/J, \phi:I\to R/I) \mid \psi(j) \equiv \phi_(j)(\hbox{mod } I) \hbox{ for all }j\in J\}.
$$
  If $\psi(j) \equiv \phi_(j)(\hbox{mod } I)$ for all $j\in J$ then 
every element $j+\e\psi(j)$ is obviously in $I_{\phi}$. Conversely, if $j+\e\psi(j) = i+\e\phi(i)+\e i'$
with $i'\in I$, then  $i=j$, so $\psi(j)=\phi(j)+i'$. This proves that $\CT$ is the fibered product. The last statement of the Lemma is an immediate consequence.
\end{proof}

Recall from Section~\ref{section:sketch} that if $X$ is reduced and has generic Hilbert function, then
$\CCH_X$ and the Hilbert scheme $\CH_X$ coincide in a neighborhood of $X$. 
Moreover,  $\CCH_X$ and $\CH_X$ are irreducible.

\begin{theorem}\label{dominance} Let $G\subset \PP^{n}$ be a finite scheme
such that cone over $G$ is a smooth point on $\CCH_{G}$. 
Suppose that $X\subset G$ is a union of some of the components of $G$ that are reduced, and that $X$ has generic Hilbert function. Let $d = \deg X$. If
$$
\dim_{k}\Hom_{S}(I_{G}, S_{G})_{0} - \dim_{k}\Hom_{S}(I_{G}, I_{X}/I_{G})_{0} = nd,
$$ 
then 
the projection map $\CCH_{ X\subset G}\to \CH_{ X}$ is dominant.
\end{theorem}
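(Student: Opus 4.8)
The plan is to reduce the statement to a surjectivity of tangent maps. Concretely, I will show that $\CH_X$ is smooth at $X$ of dimension $nd$, that $\CCH_{X\subset G}$ is smooth at the point $(X,G)$, and that the differential of the projection $q\colon\CCH_{X\subset G}\to\CH_X$ is surjective there. Dominance then follows from the standard fact that a morphism whose differential is surjective at a smooth point of the source, lying over a smooth point of an irreducible target, is smooth at that point, so its image contains a Zariski-open neighborhood of $X$.

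First I would assemble the tangent data. By the discussion preceding the theorem, $\CH_X=\CCH_X$ near $X$ is smooth of dimension $nd$ with tangent space $T_X=\Hom_S(I_X,S_X)_0$, and since the cone over $G$ is a smooth point of $\CCH_G$ its tangent space is $T_G=\Hom_S(I_G,S_G)_0$. Applying Lemma~\ref{defoLemma} in its graded form, with $Z=\AA^{n+1}$ and degree-zero parts, presents $T_{(X\subset G)}$ as the fibered product $T_G\times_W T_X$ over $W:=\Hom_S(I_G,S_X)_0$, where $\beta\colon T_G\to W$ is induced by $S_G\twoheadrightarrow S_X$ and $\rho\colon T_X\to W$ by $I_G\hookrightarrow I_X$; under this description the differential of $q$ is the projection $(\psi,\phi)\mapsto\phi$.

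The first key point is that $\rho$ is an isomorphism. Because $X$ is a union of reduced components of $G$, one has $\CI_X=\CI_G$ locally at each point of $X$, while $\CO_X$ vanishes off $\mathrm{Supp}\,X$; hence $\CHom(\CI_X,\CO_X)=\CHom(\CI_G,\CO_X)$ as sheaves on $\PP^n$, and the generic Hilbert function of $X$ identifies the relevant graded $\Hom$'s with these global sections, giving $\dim_k W=\dim_k T_X=nd$. Now the hypothesis enters: from $0\to I_X/I_G\to S_G\to S_X\to 0$ and left-exactness of $\Hom_S(I_G,-)_0$ I get $\dim\image(\beta)=\dim T_G-\dim\Hom_S(I_G,I_X/I_G)_0$, which the hypothesis equates to $nd=\dim W$. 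Thus $\beta$ is surjective, so every $\phi\in T_X$ can be matched with some $\psi\in T_G$ having $\beta(\psi)=\rho(\phi)$, and the projection $T_{(X\subset G)}\to T_X$ is surjective. Finally, $\CCH_{X\subset G}$ is smooth at $(X,G)$: the forgetful map to $\CCH_G$ is a local isomorphism near $(X,G)$ because for $G'$ near $G$ the decomposition into connected components deforms the splitting $G=X\sqcup Y$ uniquely---the reduced length-one components remain reduced---so $X'$ is determined by $G'$; hence $\CCH_{X\subset G}$ is smooth of dimension $\dim\CCH_G$ there. With source and target smooth and $dq$ surjective, $q$ is dominant.

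The step I expect to be the main obstacle is the passage from surjectivity of the differential to genuine dominance, which rests on knowing that $(X,G)$ is a smooth point of $\CCH_{X\subset G}$; establishing this through the unique deformation of the idempotent splitting $G=X\sqcup Y$ (equivalently, the local isomorphism with $\CCH_G$) is the delicate geometric input. Closely related, and the place where the generic-Hilbert-function hypothesis is indispensable, is the careful matching of the graded modules $\Hom_S(-,-)_0$ occurring in the numerical hypothesis with the sheaf $\Hom$'s that compute the geometric tangent spaces to the two Hilbert schemes.
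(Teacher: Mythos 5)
Your proof is correct and follows essentially the same route as the paper's: the fibered-product tangent space from Lemma~\ref{defoLemma}, the identification $\Hom_{S}(I_{X},S_{X})_{0}\cong\Hom_{S}(I_{G},S_{X})_{0}$ of dimension $nd$ via the support and generic-Hilbert-function arguments, and the exact sequence $0\to\Hom_{S}(I_{G},I_{X}/I_{G})_{0}\to\Hom_{S}(I_{G},S_{G})_{0}\to\Hom_{S}(I_{G},S_{X})_{0}$ turning the numerical hypothesis into surjectivity of the tangent map. The paper obtains smoothness of $\CCH_{X\subset G}$ at $(X,G)$ by the same local identification with the smooth $\CCH_{G}$ (tangent map an isomorphism plus surjectivity of $\CCH_{X\subset G}\to\CCH_{G}$), so nothing essential differs.
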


\begin{proof} 
Consider the diagram with exact row
$$\begin{diagram}
0&\rTo&\Hom_{S}(I_G,I_{X}/I_{G})&\rTo&\Hom_{S}(I_{G}, S_{G})&\rTo& \Hom_{S}(I_{G}, S_{X})\\
&&&&&&\uTo_{\phi}\\
&&&&&&\Hom_{S}(I_{X}, S_{X}).
\end{diagram}
$$
We begin by computing the dimension of $\Hom_{S}(I_{X}, S_{X})_{0}$, the degree zero part of $\Hom_{S}(I_{X}, S_{X})$. We may
interpret this space as the space of first-order infinitesimal deformations
of $X$ as a cone---that is, as the tangent space to $\CCH_{X}$ at the point $X$.
The computation of this space commutes with base change, and since we have assumed that
the ground field $k$ is perfect, the base change to $\overline k$ remains reduced. Thus to compute the dimension of
$\Hom_{S}(I_{X}, S_{X})_{0}$ we may assume that $X$ consists of $d$ distinct $k$-rational points. 

The sheaf $\CHom_{S}(\CI_{X}, \CO_{X})$ is the sheafification of
$\Hom_{S}(I_{X}, S_{X})$, so there is a natural map
$$
\alpha: \Hom_{S}(I_{X}, S_{X})_{0}\to H^{0}(\CHom_{S}(\CI_{X}, \CO_{X})).
$$
The source of $\alpha$ may be identified with the tangent spce to Hilbert scheme of cones
near $X$, while the target may be identified with the Hilbert scheme of collections of points near $X$,
and $\alpha$ is the map induced by forgetting the cone structure. 
Since, by assumption, $X$ has generic Hilbert function, these Hilbert schemes coincide, and $\alpha$ is
an isomorphism.
Thus $\dim \Hom_{S}(I_{X}, S_{X})_{0} = nd$. 


Though the map $\phi: \Hom_{S}(I_{X}, S_{X})\to \Hom_{S}(I_{G}, S_{X})$ in the diagram above
is generally not an isomorphism, we will next show that it induces an isomorphism between
the components of degree 0.
Using the fact that $S_{X}$ is reduced, so that $Hom_{S}(I_{G}, S_{X})$ has depth $\geq 1$, 
we see that 
the natural map
$$
\beta: \Hom_{S}(I_{G}, S_{X})_{0}\to H^{0}\CHom(\CI_{G}, \CO_{X})
$$
is an injection. On the other hand, any section of $\CHom(\CI_{G}, \CO_{X})$
is supported on $X$. Because $X$ is a union of the components of $G$, this implies that
$$
H^{0}\CHom(\CI_{G}, \CO_{X}) = H^{0}\CHom(\CI_{X}, \CO_{X}).
$$
Together with the equality $Hom_{S}(I_{X}, S_{X})_{0} = H^{0}\Hom(\CI_{X}, \CO_{X})$, this implies that the map
$$
\phi_{0}: \Hom_{S}(I_{X}, S_{X})_{0}\to \Hom_{S}(I_{G}, S_{X})_{0}
$$
is an isomorphism, as claimed.

Let $\cone X\subset \cone G\subset \AA^{n+1}$ be the cones over $X$ and $G$ respectively.
We may apply Lemma~\ref{defoLemma}, which tells us that the space of first-order 
deformations of the pair $\cone X\subset \cone G$  is the fibered product of $\Hom_{S}(I_{X}, S_{X})$ and $\Hom_{S}(I_{G}, S_{G})$ over $\Hom_{S}(I_{G}, S_{X})$. Since we wish to look only at deformations as cones, we take the degree 0 parts of these spaces, and we see that the tangent space to $\CCH_{X\subset G}$ is
the fibered product of $\Hom_{S}(I_{X}, S_{X})_{0}$ and $\Hom_{S}(I_{G}, S_{G})_{0}$ over $\Hom_{S}(I_{G}, S_{X})_{0}$. Since $\phi_{0}$ is an isomorphism, 
the tangent space to $\CCH_{X\subset G}$ is isomorphic, via the projection, to
$\Hom_{S}(I_{G}, S_{G})_{0}$, the tangent space to $\CCH_{G}$ at $G$.

Since $\cone X$ consists of a subset of the irreducible components of $\cone G$, and $X$ has generic Hilbert function, it follows that
the map $\CCH_{X\subset G}\to \CCH_{G}$ is surjective. Since $\CCH_{G}$ is smooth
at $G$, and the map of tangent spaces is an isomorphism, it follows that 
$\CCH_{X\subset G}$ is smooth at the pair $(X \subset G)$.

To prove that the other projection map $\CCH_{X\subset G} \to \CCH_{X}$ is dominant it now suffices to show that the map on tangent spaces
$$
T_{X\subset G} \CCH_{X\subset G}= T_{G}\CCH_{G} \to T_{X}\CCH_{X} =\Hom_{S}(I_{G}, S_{X})_{0}
$$
is onto or, equivalently, that
the right hand map in the sequence 
$$
0\to \Hom_{S}(I_G,I_{X}/I_{G})_{0}\to \Hom_{S}(I_{G}, S_{G})_{0}\to \Hom_{S}(I_{G}, S_{X})_{0}
$$
is surjective. Since the right hand vector space has dimension $nd$, this follows from our hypothesis on dimensions.
\end{proof}

\begin{corollary}\label{cordom} Let $G\subset \PP^{n}$ be a finite scheme.
Suppose that $X\subset G$ is a union of some of the components of $G$ that are reduced, and that $X$ has generic Hilbert function. Let $d = \deg X$. 
If
$$
\dim_{k}\Hom_{S}(I_{G}, I_{X}/I_{G})_{0} = \dim_G \CCH_G- nd,
$$ 
then $\CCH_G$ and $\CCH_{X\subset G}$ are smooth in $G$ respectively $(X \subset G)$
and the projection map $\CCH_{ X\subset G}\to \CCH_{ X}$ is dominant.
\end{corollary}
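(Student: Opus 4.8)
The plan is to deduce Corollary~\ref{cordom} from Theorem~\ref{dominance} by checking that the numerical hypothesis of the corollary is simply the hypothesis of the theorem rewritten, and that the smoothness hypothesis on $G$ that the theorem assumes is in fact automatically supplied here. Recall that Theorem~\ref{dominance} requires two things: that the cone over $G$ be a smooth point of $\CCH_G$, and that the dimension equation
$$
\dim_{k}\Hom_{S}(I_{G}, S_{G})_{0} - \dim_{k}\Hom_{S}(I_{G}, I_{X}/I_{G})_{0} = nd
$$
hold. The corollary instead hypothesizes only the single equation $\dim_{k}\Hom_{S}(I_{G}, I_{X}/I_{G})_{0} = \dim_G \CCH_G - nd$, and asserts as part of its \emph{conclusion} that $\CCH_G$ is smooth at $G$. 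So the content is that the corollary's equation plus the general deformation theory force smoothness, which then unlocks the theorem.

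First I would recall the standard identification of the Zariski tangent space to $\CCH_G$ at $G$ with $\Hom_S(I_G, S_G)_0$, noted in Section~\ref{section:sketch}. This gives the universal inequality $\dim_G \CCH_G \leq \dim_k \Hom_S(I_G, S_G)_0$, with equality precisely when $G$ is a smooth point of $\CCH_G$. Substituting the corollary's hypothesis $\dim_{k}\Hom_{S}(I_{G}, I_{X}/I_{G})_{0} = \dim_G \CCH_G - nd$ into the theorem's dimension equation, the theorem's equation becomes
$$
\dim_{k}\Hom_{S}(I_{G}, S_{G})_{0} = \dim_G \CCH_G.
$$
Thus the theorem's numerical hypothesis is equivalent, under the corollary's hypothesis, to the statement that the tangent space has dimension equal to $\dim_G \CCH_G$ — that is, to $G$ being a smooth point. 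So the two hypotheses of the theorem collapse into one, and that one is implied by the corollary's equation.

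The key step, then, is to argue that the corollary's equation by itself already forces $G$ to be a smooth point of $\CCH_G$. The mechanism is the chain of inequalities $\dim_G \CCH_G \leq \dim_k \Hom_S(I_G, S_G)_0$ (tangent space bounds dimension from above) combined with a lower bound $\dim_k \Hom_S(I_G, S_G)_0 \leq \dim_G \CCH_G$ coming from the fibered-product analysis in the proof of Theorem~\ref{dominance}. Concretely, the argument in that proof shows the tangent space to $\CCH_{X\subset G}$ maps isomorphically onto $\Hom_S(I_G,S_G)_0$ and fits into the left-exact sequence whose terms have dimensions $\dim_k\Hom_S(I_G,I_X/I_G)_0$, $\dim_k\Hom_S(I_G,S_G)_0$, and $nd$; counting dimensions in this sequence, together with the corollary's hypothesis, pins $\dim_k\Hom_S(I_G,S_G)_0$ to be at most $\dim_G\CCH_G + nd - (\dim_G\CCH_G - nd)$ adjusted appropriately, and the surjectivity established there yields equality. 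Once equality holds, $G$ is smooth on $\CCH_G$, the dimension equation of the theorem holds, and I would simply invoke Theorem~\ref{dominance} to conclude that $\CCH_{X\subset G}\to\CH_X$ is dominant and that $\CCH_G$ and $\CCH_{X\subset G}$ are smooth at $G$ and $(X\subset G)$ respectively.

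I expect the main obstacle to be the lower-bound half of the smoothness argument: showing that the corollary's single dimension equation genuinely \emph{forces} equality $\dim_k \Hom_S(I_G, S_G)_0 = \dim_G \CCH_G$ rather than merely being consistent with it. The upper bound (tangent space $\geq$ dimension) is automatic, but the reverse requires that the deformations counted by $\Hom_S(I_G,I_X/I_G)_0$ really do account for the full excess of the tangent space over $nd$, which is exactly what the exact-sequence bookkeeping in Theorem~\ref{dominance}'s proof provides. Care is needed to make sure no inequality is used in the wrong direction, since the whole point of the corollary is that we are \emph{not} assuming smoothness a priori but deriving it.
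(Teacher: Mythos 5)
Your proposal is correct and follows essentially the same route as the paper: the paper's proof is exactly the squeeze $\dim_G \CCH_G \le \dim T_G\CCH_G = \dim\Hom_S(I_G,S_G)_0 \le \dim\Hom_S(I_G,I_X/I_G)_0 + nd = \dim_G\CCH_G$ (the middle inequality from the left-exact Hom sequence and the identification $\dim\Hom_S(I_G,S_X)_0=nd$ from the theorem's proof), forcing equality throughout, hence smoothness of $\CCH_G$ at $G$ and the theorem's numerical hypothesis. Your intermediate arithmetic expression is garbled, but the mechanism you describe is the paper's.
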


\begin{proof} Since 
$$
\begin{aligned}
\dim_G \CCH_G &\le \dim T_G{\CCH_G}= \dim \Hom_{S}(I_{G}, S_{G})_{0} \\
&\le \dim \Hom_{S}(I_{G}, I_{X}/I_{G})_{0}+ \dim \Hom_{S}(I_{G}, S_{X})_{0} \\
&=\dim \Hom_{S}(I_{G}, I_{X}/I_{G})_{0}+nd
\end{aligned}
$$
equality holds by our assumption, and $\CCH_G$ is smooth at $G$. Now the Theorem applies.
\end{proof}

\section{Computational Approach}\label{comp}

To classify the possible bi-dominant direct Gorenstein linkage correspondences we make use of h-vectors \cite{Stanley1978}, which are defined as follows. See \cite{HSS} for further details.

Let $R=S/I$ be a homogeneous Cohen-Macaulay factor ring of a polynomial ring S with $\dim R= k$.
The \emph{h-vector} of $R$ (or of $X$ in case $I=I_X$) is defined to be the $k$-th difference of the Hilbert function of $R$. If $\ell_1,\ldots, \ell_k$ is an R-sequence of linear forms, then $R/( \ell_1,\ldots,\ell_k)$ is artinian, and its Hilbert function is equal to the h-vector of $R$. It follows in particular that the  h-vector consists of a finite sequence of positive integers followed by zeros. Over a small field such an 
$R$-sequence may not exist, but the h-vector does not change under extension of scalars, so the conclusion remains true.  We often specify an h-vector by giving just the list of non-zero values.We can make a similar construction for any Cohen-Macaulay module.
If $X$ is a finite scheme then the sum of the terms in the h-vector is the degree of $X$. The h-vector of a Gorenstein ideal is symmetric.

From the definition it follows at once that the h-vector of a set of points $X$ with general Hilbert function 
is equal to the Hilbert function of the polynomial ring in 3 variables except (possibly) for the last nonzero term, and thus has the form 
$$
1,3,6,..., {s+1\choose 2}, a, 0,\dots \hbox{ with $0\leq a\leq {s+2\choose 2}$.}
$$
where $s$ is the least degree of a surface containing $X$. For example the h-vector of a general collection of $21$ points in $\PP^3$ is $\{1,3,6,10,1\}$.  

We will make use of the observation that if $I_X$ and $I_Y$ in $S$ are directly Gorenstein linked via $I_G$ then the h-vector of $X$, plus some shift of the reverse of the h-vector of $Y$, is equal to the h-vector of $G$ (see~\cite{HSS}, 2.14.)
One way to see this is to reduce all three of $I_{X},I_{Y}, I_{G}$ modulo a general linear form. 
The relation of linkage is preserved, and $\omega_{X} \cong I_{Y}/I_{G}$ (up to a shift). Since the Hilbert function of $\omega_{X}$ is the reverse of the Hilbert function of $S/I_{X}$, this gives the desired relation.

For example our computations show that a general collection of 21 points in $\P^{3}$, with h-vector
$\{1,3,6,10,1\}$ as above is directly Gorenstein linked to a collection of 9 points with general Hilbert function, and thus h-vector $\{1,3,5\}$. The Gorenstein ideal that links them will have h-vector
$\{1,3,6,10,6,3,1\}$, and we have:
\begin{align*}
\ 1\ 3\ 6\ 10\ &1\\
+\ \ \ \ \ \ \ \ \ \ \ \ \ \ \ \ \ &5\ 3\ 1\\
=\ \ 1\ 3\ 6\ 10\ &6\ 3\ 1
\end{align*}
The additivity of h-vectors in zero-dimensional Gorenstein liaison can be traced back to Macaulay (see~\cite{Macaulay1913}, p. 112)  as Tony Iarrobino pointed out to us.

We return to the problem of linking general sets of points. Let $\CH_d \subset Hilb_d(\PP^3)$ denote the irreducible component whose general point corresponds to a collection of $d$ distinct points.
Consider an (irreducible) direct  Gorenstein linkage correspondence
$$
\CCH_{X\cup Y = G} \to \CH_d\times \CH_e
$$
Recall that $\CCH_{X \cup Y = G}$ is said to be bi-dominant if it dominates both $\CH_d$ and $\CH_e$.
The h-vector of a general point $G =X\cup Y$ of a bi-dominant correspondence is rather special. Most of the following Proposition can be found in \cite{HSS}, 7.2; we repeat it for the reader's convenience.

\begin{proposition}\label{h-vectors} Let $G=X \cup Y $ be a general point of a bi-dominant correspondence. Then the h-vector of $G$ is one of the following:

\begin{itemize}
\item[I)] $\{1,3,6, \ldots, { s+1 \choose 2},  { s+1 \choose 2}+c,  { s+1 \choose 2},\ldots, 3, 1\}$ with $0\le c \le s+1$, or
\item[II)] $\{1,3,6, \ldots, { s+1 \choose 2},  { s+1 \choose 2}+c,  { s+1 \choose 2}+c, { s+1 \choose 2},\ldots, 3, 1\}$ with $0\le c \le s+1$
\end{itemize}
\end{proposition}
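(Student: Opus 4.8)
The plan is to translate the geometric hypothesis—that $G = X \cup Y$ is a \emph{general} point of a bi-dominant correspondence—into numerical constraints on the $h$-vector, and then show that only shapes I) and II) survive. The key structural facts I would use are: (a) the additivity of $h$-vectors under direct Gorenstein linkage, namely that the $h$-vector of $G$ equals the $h$-vector of $X$ plus an appropriate shift of the \emph{reverse} of the $h$-vector of $Y$ (established in the paragraph preceding the Proposition); (b) the symmetry of the $h$-vector of the Gorenstein scheme $G$; and (c) the explicit form of the $h$-vector of a set of points with generic Hilbert function, which is $\{1,3,6,\ldots,\binom{s+1}{2},a\}$ with $0 \le a \le \binom{s+2}{2}$.

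First I would set up notation: let the $h$-vector of $X$ be the generic one $\{1,3,6,\ldots,\binom{s+1}{2},a\}$ truncated after degree $s$, and similarly let $Y$ have generic $h$-vector ending in some term $b$ in degree $t$. Bi-dominance means $G$ deforms to link \emph{general} $X$ to \emph{general} $Y$, so both $X$ and $Y$ must themselves have generic Hilbert function; this forces their $h$-vectors into the shape recalled above. Writing the additivity relation $h_G = h_X + \sigma(h_Y^{\mathrm{rev}})$ for the correct shift $\sigma$, I would then impose that $h_G$ is symmetric (since $G$ is arithmetically Gorenstein). The beginning of $h_G$ is forced to agree with the beginning of $h_X$, i.e.\ $1,3,6,\ldots,\binom{s+1}{2}$, because the reversed-and-shifted contribution of $Y$ only affects the higher-degree end; symmetry then pins down the tail of $h_G$ to be the mirror image $\ldots,6,3,1$.

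The crux of the argument is to control the \emph{middle} of the $h$-vector, where the contribution of $X$'s last term $a$ and the reversed contribution of $Y$ overlap. Here I expect to split into cases according to how the supports of $h_X$ and the shifted $h_Y^{\mathrm{rev}}$ interlock—either they overlap in exactly one degree (producing a single ``bump'' $\binom{s+1}{2}+c$ and giving shape I)) or in exactly two adjacent degrees (producing the twin plateau $\binom{s+1}{2}+c,\binom{s+1}{2}+c$ of shape II)). The bound $0 \le c \le s+1$ should come out of the generic-Hilbert-function constraint $0 \le a \le \binom{s+2}{2}$ on $X$ (and symmetrically on $Y$) combined with the requirement that the resulting symmetric sequence be a genuine Gorenstein $h$-vector satisfying Macaulay/Stanley growth conditions. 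Ruling out overlaps in three or more degrees, and showing the bump heights cannot exceed $\binom{s+1}{2}+(s+1)$, is where I would have to be careful.

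The main obstacle, I expect, is not the additivity or symmetry bookkeeping but rather \textbf{justifying that a general point of a bi-dominant correspondence has a generic $h$-vector for $X$, $Y$, and the specific interlocking pattern for $G$}—that is, ruling out all other symmetric $h$-vectors a priori consistent with Stanley's characterization. Dominance onto $\CH_d$ and $\CH_e$ forces genericity of $X$ and $Y$, but one must also check that no ``wider'' or more irregular symmetric $h$-vector of $G$ can arise while still admitting a bi-dominant linkage; this is essentially a dimension count, comparing $\dim \CCH_G$ against $\max(3d,3e)$ using the smoothness and dimension formula from Theorem~\ref{dominance} and Corollary~\ref{cordom}. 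I would lean on the cited result \cite{HSS}, 7.2 for the technical heart of this exclusion, and present the additivity-plus-symmetry derivation as the self-contained part.
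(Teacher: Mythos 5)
Your proposal follows essentially the same route as the paper: bi-dominance forces $X$ and $Y$ to have generic Hilbert functions, and then additivity of h-vectors under direct Gorenstein linkage, symmetry of $h_G$, a case analysis on how $h_X$ and the shifted reverse of $h_Y$ overlap, and Stanley's nonnegativity condition on the first difference of the first half of $h_G$ yield exactly the shapes I) and II). The only small divergence is that the paper needs no dimension count at this stage---the exclusion of other symmetric shapes is purely combinatorial, since $h_G$ must agree with $h_X$ up to position $t$ and the generic h-vectors are ``full'' except in their last entries; the comparison of $\dim\CCH_G$ with $\max(3d,3e)$ enters only later, in the finiteness corollary.
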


\begin{proof}  We may assume that $d \ge e$. A collection of $d$ general points has generic Hilbert function. Thus the h-vector of $X$ has the shape
$h_X=\{1,3, \ldots, {t+1 \choose 2}, a\}$, where $d={ t+1 \choose 3} +a$ is the unique expression with $0 \le a < { t+1 \choose 2}$. Similarly, we have
$h_{Y}=\{1,3, \ldots, {t'+1 \choose 2}, a'\}$, where $e={ t'+1 \choose 3} +a'$. On the other hand  since $G$ is arithmetically Gorenstein,  the h-vector of $G$ is symmetric.
As explained above, the difference  $h_G-h_X$ coincides, after  a suitable shift, with the h-vector of $Y$ read backwards.
Since the h-vector of $G$ coincides with the h-vector of $X$ up to position $t$ we  have
only the possibilities
\begin{enumerate}
\item $t=t'$ and $a=a'=0$ with $h_G= \{ 1, 3 , \ldots, {t+1 \choose 2}, {t+1 \choose 2}, \ldots,3,1\}$ of type $II)$ with $s=t-1$ and $c=s+1$
\item $t=t'+1$ and $a=a'=0$ with $h_G= \{ 1, 3 , \ldots, {s+2 \choose 2}, {s+1 \choose 2}, \ldots,3,1\}$ of type $I)$ with $s=t-1=t'$ and $c=s+1$
\item $t=t'+2$ and $a+a'={ s+1 \choose 2}$ with $h_G= \{ 1, 3 , \ldots, {s+2 \choose 2}, {s+1 \choose 2}, \ldots,3,1\}$ of type $I)$ with $s=t-1=t'+1$ and $c=s+1$
\item  $s=t=t'$ and $h_G= \{ 1, 3 , \ldots, {s+1 \choose 2}, a+a',  {s+1 \choose 2}, \ldots,3,1\}$ of type $I)$
\item  $s=t=t'$, $a=a' $ with $h_G= \{ 1, 3 , \ldots, {s+1 \choose 2}, a,a, {s+1 \choose 2}, \ldots,3,1\}$ of type $II)$

\end{enumerate}
By Stanley's theorem  \cite{Stanley1978} 4.2  the difference function of the first half of $h_G$ is nonnegative, so we have $a+a' \ge {s+1 \choose 2}$ and
$a=a' \ge {s+1 \choose 2}$ respectively in the last two cases.
\end{proof}

 
\begin{proposition} Let $G$ be an arithmetically Gorenstein set of points in $\PP^{3}$ with h-vector $h$ be as in Proposition~\ref{h-vectors},
and let $g(h)$ be the dimension of $\CCH_{G}$.
\begin{itemize}
\item In case I, $g(h)=4s(s+1)+4c-1$.
\item In case II, $g(h)=\frac{9}{2} s(s+1)+\frac{1}{2}c(c+13)-cs -1$.
\end{itemize}
\end{proposition}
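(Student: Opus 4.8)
The plan is to reduce the computation of $g(h)=\dim\CCH_{G}$ to a tangent‑space calculation. By the results of Diesel and Kleppe quoted in Section~\ref{section:sketch}, $\CCH_{G}$ is smooth and irreducible at $G$, so
$$
g(h)=\dim_{k}\Hom_{S}(I_{G},S_{G})_{0}.
$$
By~\cite{BE} the ideal $I_{G}$ admits a self‑dual minimal free resolution
$$
0\to S(-c_{3})\to F^{\vee}\xrightarrow{\ M\ }F\to S\to S_{G}\to 0,\qquad F=\bigoplus_{i}S(-a_{i}),\quad F^{\vee}=\bigoplus_{i}S(a_{i}-c_{3}),
$$
with $M$ skew‑symmetric of odd size and $c_{3}=\sigma+3$, where $\sigma$ is the socle degree of the Artinian reduction $A$ of $S_{G}$ ($\sigma=2s$ in case I, $\sigma=2s+1$ in case II). The first step is therefore to read off the generator degrees $a_{i}$ from the $h$‑vector, i.e.\ from the numerator $(1-t)^{3}H_{A}(t)$, using Diesel's description of which degree matrices occur for a general $G$.

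Given the $a_{i}$, I would compute the tangent space from the presentation $F^{\vee}\xrightarrow{M}F\to I_{G}\to 0$. Applying $\Hom_{S}(-,S_{G})$ and using that both the augmentation $F\to S$ and the last syzygy $S(-c_{3})\to F^{\vee}$ have all entries in $I_{G}$ (they are the Pfaffians of $M$), hence vanish modulo $I_{G}$, the complex $\Hom_{S}(P_{\bullet},S_{G})$ degenerates to
$$
S_{G}\xrightarrow{\ 0\ }\bigoplus_{i}S_{G}(a_{i})\xrightarrow{\ \overline{M}\ }\bigoplus_{i}S_{G}(c_{3}-a_{i})\xrightarrow{\ 0\ }S_{G}(c_{3}),
$$
where $\overline M$ is the reduction of $M$ modulo $I_{G}$. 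Thus $\Hom_{S}(I_{G},S_{G})_{0}=\ker(\overline{M})_{0}$, and writing $C^{1}_{0}=\bigoplus_{i}(S_{G})_{a_{i}}$, $C^{2}_{0}=\bigoplus_{i}(S_{G})_{c_{3}-a_{i}}$ (graded pieces of $S_{G}$ are the partial sums of $h$),
$$
g(h)=\dim_{k}C^{1}_{0}-\operatorname{rank}\overline{M}_{0}=\dim_{k}C^{1}_{0}-\dim_{k}C^{2}_{0}+\dim_{k}\operatorname{Ext}^{2}_{S}(S_{G},S_{G})_{0}.
$$
The two outer sums are immediate from $h$; the entire difficulty is concentrated in the correction $\dim_{k}\operatorname{Ext}^{2}_{S}(S_{G},S_{G})_{0}=\dim_{k}\operatorname{coker}(\overline{M})_{0}$, equivalently the rank of the skew multiplication map $\overline M_{0}$.

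To pin this down I would exploit that a reduced finite scheme is a local complete intersection, so the sheaves $\mathcal{E}xt^{q}(\mathcal O_{G},\mathcal O_{G})$ are the exterior powers $\wedge^{q}N_{G}$ of the rank‑$3$ normal bundle, all skyscrapers on $G$; this predicts $\operatorname{Ext}^{2}_{S}(S_{G},S_{G})_{0}=3\deg G$ for a general $G$. The real obstacle is exactly that this value is not uniform: when the $h$‑vector forces $G$ to degenerate to a complete intersection the entries of $M$ themselves lie in $I_{G}$, so $\overline M=0$, $\operatorname{rank}\overline M_{0}=0$, and $g(h)=\dim_{k}C^{1}_{0}$ instead. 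I expect identifying precisely which ranges of $c$ produce such degenerations (and checking that elsewhere $\overline M_{0}$ has maximal rank, an independent‑conditions / $H^{1}$‑vanishing statement) to be where the genuine work lies, together with the case analysis already present in Step~1, where the shape of the degree matrix jumps — for instance whether generators first appear in degree $s$ or $s+1$ (governed by $c=s+1$) and whether degree‑$(s+2)$ generators occur (governed by the sign of $s-3c$ in case I).

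The final step is arithmetic: substitute $\dim_{k}C^{1}_{0}$, $\dim_{k}C^{2}_{0}$ and the correction into the displayed formula, carry out the count in each of the finitely many ranges of $c$, and verify that despite the jumps in the individual Betti numbers the answer collapses to $4s(s+1)+4c-1$ in case I and to $\tfrac{9}{2}s(s+1)+\tfrac{1}{2}c(c+13)-cs-1$ in case II. As normalization checks one can verify small cases: case I with $(s,c)=(3,4)$ gives the $9\times9$ matrix of linear forms underlying the $30=20+10$ linkage, with $g(h)=234-261+90=63=3\cdot 21$; case II with $(s,c)=(1,2)$ recovers the complete intersection of three quadrics, with $g(h)=\dim\Gr(3,10)=21$; and case II with $(s,c)=(1,0)$ gives four collinear points with $g(h)=8$, the last being precisely the complete‑intersection degeneration where the naive value $3\deg G$ must be corrected.
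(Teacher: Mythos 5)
Your reduction is set up correctly as far as it goes: smoothness and irreducibility of $\CCH_{G}$ (Diesel, Kleppe) give $g(h)=\dim_{k}\Hom_{S}(I_{G},S_{G})_{0}$, dualizing the Buchsbaum--Eisenbud resolution into $S_{G}$ does kill the two outer maps (their entries are Pfaffians, hence lie in $I_{G}$), and your three normalization checks come out right. But the proposal stops exactly where the proof has to happen. The whole content of the statement is the value of $\operatorname{rank}\overline{M}_{0}$, equivalently $\dim_{k}\operatorname{coker}(\overline{M})_{0}=\dim_{k}\operatorname{Ext}^{2}_{S}(S_{G},S_{G})_{0}$, and you never compute it: you ``predict'' $3\deg G$ for general $G$, observe that it fails for complete intersections, and explicitly defer both the delimitation of the degenerate ranges of $c$ and the maximal-rank statement elsewhere (``where the genuine work lies''). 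Two further points make this more than a bookkeeping omission. First, even for reduced $G$, where $\mathcal{E}xt^{2}(\CO_{G},\CO_{G})\cong\wedge^{2}N_{G}$ is a skyscraper of length $3\deg G$, the degree-zero graded piece of the \emph{module} $\operatorname{Ext}^{2}_{S}(S_{G},S_{G})$ need not equal the length of its sheafification; you must kill the local cohomology of that module at the vertex of the cone in degree $0$, and this is precisely the kind of statement that degenerates along with the degree matrix. Second, the inputs to your ``final arithmetic'' --- the generic generator degrees $a_{i}$ for each h-vector of types I and II, which jump with $c$ in the ways you allude to --- are never written down, so the claimed collapse to $4s(s+1)+4c-1$ and $\tfrac{9}{2}s(s+1)+\tfrac{1}{2}c(c+13)-cs-1$ is asserted, not verified. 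As it stands this is a plausible strategy with spot checks, not a proof.

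For comparison, the paper does not compute tangent spaces here at all: case I is quoted directly from \cite{HSS} 7.2 (type 3), and case II is done by induction on $s$ using \cite{HSS} 5.3, starting from the complete-intersection h-vectors $\{1,1,1,1\}$ and $\{1,2,2,1\}$ and eventually falling back on type 2 of \cite{HSS} 7.2 when $c\ge 2$ --- that is, by the geometry of AG schemes as divisors of type $mH-K$ on ACM curves rather than by a Betti-number computation. If you want to complete your route, the missing lemma is: for $G$ general in $\CCH_{h}$, $\overline{M}_{0}$ has corank exactly $3\deg G$ in its target except in the explicitly listed degenerate cases, where its rank must be computed separately. Proving that is comparable in difficulty to the dimension counts in \cite{HSS} that the paper cites, so nothing has been gained until it is done.
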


\begin{proof} Case I) is type 3 of \cite{HSS} 7.2. Case II) is proved analogously, by induction on $s$, using  \cite{HSS} 5.3 starting with the cases of h-vectors $\{1,1,1,1\}$ and $\{1,2,2,1\}$. Note that when $c\ge 2$,
Case II) will at some point reduce to Type 2 of \cite{HSS} 7.2.
\end{proof}

\begin{corollary}
There are only finitely many bi-dominant correspondences.
\end{corollary}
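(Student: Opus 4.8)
The plan is to reduce the statement to bounding a single integer parameter, and then to win by a crude comparison of growth rates. The starting point is the necessary condition for bi-dominance already recorded before Proposition~\ref{h-vectors}: if $\CCH_{X\cup Y=G}$ is bi-dominant between a set of $d$ and a set of $e$ points, then $\CCH_G$ must have dimension at least $\max(3d,3e)$. Writing $g(h)=\dim\CCH_G$ for the h-vector $h$ of $G$, and using $\deg G=d+e$ together with $\max(d,e)\ge\tfrac12\deg G$, this necessary condition implies $g(h)\ge\tfrac32\deg G$. The entire argument then rests on showing that this single inequality can be satisfied by only finitely many admissible h-vectors.

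First I would invoke Proposition~\ref{h-vectors}: the h-vector of a general point of a bi-dominant correspondence must be of type I or II, and in either case it is completely determined by the pair $(s,c)$ with $0\le c\le s+1$. Since $\deg G$ is just the sum of the entries of $h$, and any fixed value of $\deg G=d+e$ with $d,e\ge 1$ has only finitely many ordered splittings $(d,e)$, it suffices to prove that only finitely many pairs $(s,c)$ can occur. Because $c\le s+1$, it is in fact enough to bound $s$.

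Next I would compute $\deg G$ explicitly. Using the hockey-stick identity $\sum_{j=1}^{s}\binom{j+1}{2}=\binom{s+2}{3}$, one finds $\deg G=2\binom{s+2}{3}+\binom{s+1}{2}+c$ in type I and $\deg G=2\binom{s+2}{3}+2\binom{s+1}{2}+2c$ in type II; since $c=O(s)$, in both cases $\deg G=\tfrac13 s^{3}+O(s^{2})$ grows \emph{cubically} in $s$. On the other hand, the dimension formulas of the preceding Proposition give $g(h)=4s(s+1)+4c-1$ in type I and $g(h)=\tfrac92 s(s+1)+\tfrac12 c(c+13)-cs-1$ in type II, and because $0\le c\le s+1$ each of these is $O(s^{2})$, i.e.\ only \emph{quadratic} in $s$. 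Substituting into $g(h)\ge\tfrac32\deg G$ then produces an inequality of the shape $O(s^{2})\ge\tfrac12 s^{3}+O(s^{2})$, which fails for all sufficiently large $s$. Hence $s$ is bounded, $(s,c)$ ranges over a finite set, and only finitely many pairs $(d,e)$ survive.

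I do not expect a genuine obstacle here: once the two explicit polynomial formulas are in hand, the conclusion is a routine cubic-beats-quadratic estimate, and the conceptual crux is simply the observation that $\dim\CCH_G$ grows only quadratically in $s$ while $\deg G$ (hence the dimension $3\max(d,e)$ it must dominate) grows cubically. The one place demanding a little care is the bookkeeping in type II, where $c$ may be as large as $s+1$: one must confirm that the terms $\tfrac12 c^{2}$ and $-cs$ do not combine to push $g(h)$ beyond quadratic order, which they do not, since each is $O(s^{2})$ under $c\le s+1$. To make the finiteness effective I would also record an explicit threshold on $s$ beyond which the inequality fails, so that the finite list of candidate $(s,c)$, and therefore of pairs $(d,e)$, can be enumerated directly.
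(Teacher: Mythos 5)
Your proposal is correct and follows essentially the same route as the paper: both reduce to the growth-rate comparison that $g(h)$ is quadratic in $s$ while the degree that $g(h)$ must dominate (the paper uses $d\geq\binom{s+2}{3}$ for the larger of the two sets; you use $\max(d,e)\geq\tfrac12\deg G$ with $\deg G$ computed from the h-vector) is cubic in $s$, forcing $s$ to be bounded. The only difference is bookkeeping, and your version is, if anything, slightly more explicit about why the degree is cubic.
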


\begin{proof} If $\CCH_{h}\to \CCH_{d}$ is dominant then
we must have $g(h)\geq \dim \CH_d = 3d.$ But examining the Hilbert functions in the 
different cases we find that $d\geq {s+2\choose 3}$, which is cubic in $s$,
while the functions $g(h)$ are quadratic in $s$, so the inequality cannot hold for large $s$.
Calculation shows that $s\le 5$, and that $d=47$ is the maximal degree possible \cite{ES-glicci}. (See also \cite{HSS} 7.2, 7.3.)
\end{proof}

\begin{theorem} The bi-dominant correspondences are precisely those indictated in Figure \ref{Fig1-1}. 
\end{theorem}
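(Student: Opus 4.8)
The plan is to combine the finiteness already in hand with a finite, case-by-case computational verification. First I would reduce the classification to an explicit finite search. By Proposition~\ref{h-vectors} the $h$-vector of any Gorenstein scheme $G$ underlying a bi-dominant correspondence is of type I or II, and the preceding Corollary shows that the requirement $g(h)\ge \max(3d,3e)$ — forced because $\CCH_{X\cup Y=G}$ must dominate both $\CH_d$ and $\CH_e$, whereas its dimension is $g(h)$ — already bounds $s\le 5$ and $d,e\le 47$. Enumerating the triples $(\text{type},s,c)$ in this range produces a finite, explicit list of candidate $h$-vectors, and for each I would record the associated degrees $d,e$, the skew-symmetric degree matrix (determined by $h$ in the generic case, as in Section~\ref{splittings}), and the number $g(h)$.

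Next, for each candidate I would test bi-dominance directly. Pruning first by the necessary inequality $g(h)\ge\max(3d,3e)$ discards the candidates whose Gorenstein family is too small for either projection to dominate. For each survivor I would run the finite-field search of Section~\ref{splittings}: choose a random homogeneous skew-symmetric matrix with the prescribed degrees over a moderately large $\FF_p$ and test, by projecting to a line and factoring, whether its Pfaffian ideal defines a reduced Gorenstein scheme $G$ of degree $d+e$ containing a reduced subscheme $X$ of degree $d$ with generic Hilbert function whose residual $Y$ also has generic Hilbert function. Given such an example I would invoke Corollary~\ref{cordom}: computing $\dim_k\Hom_S(I_G,I_X/I_G)_0$ and checking the equality $\dim_k\Hom_S(I_G,I_X/I_G)_0=\dim_G\CCH_G-nd$, together with the symmetric check for $Y$, certifies that both tangent maps $T\CCH_{X\subset G}\to T\CH_X$ and $T\CCH_{Y\subset G}\to T\CH_Y$ are surjective. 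By lower semicontinuity of the rank of the tangent map on these irreducible families, surjectivity at the one computed point forces both projections to be dominant, so the correspondence is bi-dominant and supplies an edge $d$---$e$ of the graph.

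To pass from $\FF_p$ to characteristic zero, I would note that all the relevant conditions — reducedness of $G$, genericity of the Hilbert functions of $X$ and $Y$, and surjectivity of the two tangent maps — are open, so a single $\FF_p$-example that passes the tests lifts to an example over a number field and specializes to $\overline{\QQ}$, where bi-dominance persists; repeating the computation in several characteristics (reported to give identical outcomes) guards against small-characteristic accidents. Assembling the certified edges over all candidates, and confirming that the numerically feasible candidates are exactly those for which Corollary~\ref{cordom} certifies dominance, then reproduces precisely the graph of Figure~\ref{Fig1-1}.

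The main obstacle is the completeness (``no other edges'') direction, whose delicacy lies in the asymmetry of semicontinuity: surjectivity of the tangent map at one computed point proves dominance, but \emph{failure} at a random point does not prove non-dominance, since the rank can only jump up on an open set. Consequently each candidate excluded from the figure must be eliminated rigorously — either by the dimension inequality $g(h)<\max(3d,3e)$, or by confirming that no numerically feasible candidate is left untested and that every tested feasible candidate passes Corollary~\ref{cordom}. Verifying that the finite-field tangent-space computations faithfully reflect the characteristic-zero generic geometry, so that the enumeration is genuinely exhaustive, is the subtle heart of the argument.
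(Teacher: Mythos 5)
Your outline of the existence direction matches the paper: enumerate the finitely many triples $(h,d,e)$ with $g(h)\ge\max(3d,3e)$ and $h$ decomposable as the $h$-vector of $d$ general points plus the reversed $h$-vector of $e$ general points, find a witness $X\subset G$ over $\FF_p$ by the random search of Section~\ref{splittings}, certify dominance of both projections via the tangent-space equality of Corollary~\ref{cordom}, and lift to characteristic zero by regarding the example as a reduction mod $p$. That part is essentially the paper's argument.

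The genuine gap is in the completeness direction, and you have correctly located but not filled it. It is not the case that every numerically feasible candidate passes the test: the paper reports nine pairs $(d,h)$ (for example $d=7$ with $h=\{1,3,3,3,1\}$, and $d=13,14,15$ with $h=\{1,3,6,6,6,3,1\}$) which satisfy all the numerical constraints, including $g(h)\ge\max(3d,3e)$, yet for which the computational search never certifies bi-dominance. Your proposal offers no mechanism to \emph{exclude} such a candidate --- as you yourself note, failure of the rank condition at a random point proves nothing, so ``confirming that every tested feasible candidate passes'' is not an available exit. The paper's actual argument for these cases is structural: for each of the six exceptional $h$-vectors the Buchsbaum--Eisenbud skew-symmetric presentation matrix of $I_G$ contains a block of zeros forced by the degrees, so among its Pfaffians are the maximal minors of an $n\times(n+1)$ submatrix; these minors cut out an ACM curve $C$ of degree $c$ (the maximal entry of the $h$-vector, $c\in\{3,6,7,10\}$) on which $G$ must lie. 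Since $\dim\CH_C=4c$ in each case, $C$ can contain at most $2c$ general points, and the inequality $2d\le 4c$ fails for every excluded $(d,h)$; hence no such $G$ contains $d$ general points and no bi-dominant correspondence exists. Without this (or some equivalent) non-existence argument, your proof establishes only that Figure~\ref{Fig1-1} contains a subset of the bi-dominant correspondences, not that it contains all of them.
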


\begin{proof} To prove existence of a bi-dominant correspondence it suffices to find a smooth point $(X,Y) \in \CCH_{X\cup Y=G}$  and to verify that both maps on tangent spaces
$$T_{X\subset G}\CCH_{X\subset G}   \to T_X \CH_d \hbox{ and }  T_{Y \subset G}\CCH_{Y\subset G}  \to T_Y \CH_e$$ are surjective. 
We test \cite{ES-glicci} each of the finitely many triples consisting of an h-vector $h=\{h_0, \ldots, h_n\}$ and integers $(d,e)$ satisfying $g(h) \ge \max(3d,3e)$ and $\sum h_i = d+e$  and subject to the condition that
$h$ can be expressed as the sum of the h-vector of a general set of $d$ points and the reverse of
the h-vector of a general set of $e$ points, as follows:

\begin{enumerate}
\item Using the probabilistic method of section 1, find  a pair $X \subset G$ over a finite field $\FF_p$. \item Let $Y$ be the scheme defined by $I_Y= I_G:I_X$. Test whether $G$ is reduced, and whether $X$ and $Y$ have generic Hilbert functions. 
\item Test whether
$$
\dim \Hom(I_G,I_X/I_G)_0 = \dim_G \CCH_G - 3d
$$
and
$$ 
\dim \Hom(I_G,I_Y/I_G)_0 = \dim_G \CCH_G - 3e.
$$ 
\end{enumerate}
If the example $X\subset G$ and $Y$ passes the tests in 2,3  then, by Corollary \ref{cordom}, $(X,Y) \in \CCH_{X\cup Y=G}$ is a point on a bi-dominant correspondence over $\FF_{p}$. 
Since we may regard our example as the reduction mod $p$ of an example defined over some number field, this  shows the existence of a bi-dominant correspondence in characteristic zero.

There are nine pairs $(d,h)$, involving six different h-vectors, where the procedure above did not, 
in our experiments, lead to a proof of bi-dominance. They are given in the following table: \medskip

\noindent
\begin{tabular}{|r|l|l|}\hline
degrees & h-vector& Buchsbaum-Eisenbud matrix \cr \hline
7 & \{1,3,3,3,1\} & {\scriptsize$S^2(-3)\oplus S^3(-6) \to S^2(-5)\oplus S^3(-2)$}\cr
7 & \{1,3,3,3,3,1\} & {\scriptsize$S^2(-3)\oplus S^3(-5) \to S^2(-4)\oplus S^3(-2)$}\cr
13,14,15 &  \{1,3,6,6,6,3,1\}& {\scriptsize$S^3(-4)\oplus S^4(-6) \to S^3(-5)\oplus S^4(-3)$} \cr
16 & \{1,3,6,6,6,6,3,1\}& {\scriptsize$S^3(-4)\oplus S^4(-7) \to S^3(-6)\oplus S^4(-3)$} \cr
17 & \{1,3,6,7,7,6,3,1\} & {\scriptsize$S(-4)\oplus S(-5)\oplus S^3(-7) \to S(-6)\oplus S(-5)\oplus S^3(-3)$}\cr
25,26 & \{1,3,6,10,10,10,6,3,1\} &{\scriptsize $S^4(-5)\oplus S^5(-7) \to S^4(-6)\oplus S^5(-4)$}\cr\hline
\end{tabular} 
\medskip

\noindent
It remains to show
that,  in these numerical cases, there really is \emph{no} bi-dominant family. 
In each of these cases the Buchsbaum-Eisenbud matrix (the skew-symmetric presentation matrix of the $I_{G}$) has a relatively large block of zeroes, since the maps between the first summands of the free modules shown in the table is zero for degree reasons.
 (In case $d=17$ the map between the first two summands is zero, as the matrix is skew symmetric). Thus among the pfaffians of this matrix are the minors of an $n\times n+1$ matrix, for a certain value of $n$. These minors generate the ideal of an arithmetically Cohen-Macaulay (ACM) curve. In the given cases, the general such curve will be smooth. Thus, in these cases, the Gorenstein points lie on smooth ACM curves of degree $c$, the maximal integer in the h-vector of $G$ (so $c  \in \{3,6,7,10\}$.)
For example if $c=3$ there are 7 points, but a twisted cubic curve can contain at most 6 general points.
More generally, for a curve $C$ moving in its Hilbert scheme $\CH_C$ to contain $d$ general points we must have $2d \le \dim \CH_C$.
In all cases listed above, $\dim \CH_C=4c$
and $2d \le 4c$ is not satisfied.
\end{proof}

The method discussed above can be used to show more generally that having certain h-vectors forces a zero-dimensional Gorenstein scheme to be a divisor on an ACM curve. Here is a special case:

\begin{proposition}
If $Z$ is a zero-dimensional AG scheme  with h-vector $h$ of type I with $c=0$ or type II with $c=0,1$ in in Proposition \ref{h-vectors}, 
then $Z$ is a divisor in a class of the form $mH-K$ on some ACM curve whose h-vector is the first half of $h$.
\end{proposition}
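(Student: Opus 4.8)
The plan is to extract the curve directly from the Buchsbaum--Eisenbud structure theorem, in the spirit of the proof of the preceding theorem. Since $Z$ is arithmetically Gorenstein of codimension $3$, the ring $S/I_Z$ has a minimal free resolution
$$
0\to S(-t)\to \bigoplus_i S(-t+a_i)\xrightarrow{\ M\ }\bigoplus_i S(-a_i)\to S\to S/I_Z\to 0
$$
that is self-dual, with $M$ skew-symmetric and $I_Z$ generated by the submaximal Pfaffians of $M$. The generator degrees $a_1\le\cdots\le a_N$ (with $N$ odd) and the socle degree $t$ are determined by the h-vector $h$, and the entry $M_{ij}$ has degree $t-a_i-a_j$; by minimality it vanishes whenever $a_i+a_j\ge t$.

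First I would do the degree bookkeeping for the three families. The claim is that for type I with $c=0$ and type II with $c=0,1$, and only for these, the $N=2p+1$ generators split into $p+1$ of low degree and $p$ of high degree, with $a_i+a_j>t$ for every pair of high-degree generators. Then $M$ takes the block form
$$
M=\begin{pmatrix} A & B\\ -B^{T} & 0\end{pmatrix},
$$
where the lower-right $p\times p$ block, indexed by the high-degree generators, is zero, $A$ is $(p+1)\times(p+1)$, and $B$ is $(p+1)\times p$. Deleting a low-degree index from $M$ leaves a $2p\times 2p$ skew matrix of the form $\left(\begin{smallmatrix} A' & B'\\ -B'^{T} & 0\end{smallmatrix}\right)$ with all three blocks $p\times p$ and $B'$ a maximal square submatrix of $B$; its Pfaffian equals $\pm\det B'$, independent of $A'$. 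As the deleted index ranges over the $p+1$ low-degree generators, these Pfaffians run through the maximal minors of $B$. By the Hilbert--Burch theorem these $p+1$ minors generate the ideal $I_C$ of an arithmetically Cohen--Macaulay curve $C$, and reading the degrees off $B$ shows that the h-vector of $C$ is the first half of $h$. Since $I_C\subset I_Z$ we obtain $Z\subset C$.

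It remains to identify the class of $Z$ on $C$. The matrix $B$ is at the same time the Hilbert--Burch matrix of $C$, so that $\omega_C$ is the cokernel of $B^{T}$ up to a twist. Comparing the self-dual resolution of $S/I_Z$ with the Hilbert--Burch resolution of $S/I_C$ --- concretely, via the mapping cone of $I_C\hookrightarrow I_Z$ --- realizes $\mathcal I_{Z/C}$ as $\omega_C(-m)$ for the integer $m$ read off the degree matrix, which is exactly the statement that $Z$ lies in the class $mH-K$. This is the converse of the standard construction of arithmetically Gorenstein divisors on ACM curves; since $C$ may be singular or non-reduced, the cleanest formulation is through Hartshorne's generalized divisors, where $Z$ being Gorenstein and $\mathcal I_{Z/C}$ reflexive of rank one legitimize the identification. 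A check that $\deg(mH-K)=m\deg C-(2p_a-2)$ equals $\sum_i h_i$ confirms the bookkeeping.

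The main obstacle is the second step: showing that the zero corner of $M$ has exactly size $p\times p$ for precisely the values $c=0$ in type I and $c=0,1$ in type II, uniformly in $s$. This is where the special role of these $c$ enters, because for larger $c$ some pairs of high-degree generators have $a_i+a_j\le t$, the corner fails to be a full $p\times p$ block of zeros, and no single $(p+1)\times p$ matrix of minors is forced. The computation itself is elementary, but it must be organized --- most naturally as an induction on $s$, along the lines of the dimension count in the previous proposition --- so as to match the generator degrees predicted by $h$ against the self-dual degree matrix. Once the block structure is established, the curve and the divisor class follow formally.
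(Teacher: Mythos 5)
Your route is genuinely different from the one the paper writes down, though it is precisely the one the paper gestures at in the sentence introducing this proposition (``the method discussed above can be used\dots''): you extract the ACM curve from a forced zero block in the Buchsbaum--Eisenbud matrix, exactly as the paper does in its case-by-case treatment of the nine exceptional $(d,h)$ pairs in the preceding theorem. The paper's own written argument is explicitly an \emph{alternative} proof: for $c=0$ it cites \cite{HSS} 3.4(c), and for type II with $c=1$ it runs an induction on $s$ using \cite{HSS} 5.3 and 5.5, showing that $\dim\CCH_{h}$ equals the dimension of the family of divisors $Z\sim mH-K$ on ACM curves with the prescribed h-vector, starting from $h=\{1,3,4,4,3,1\}$. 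What your approach buys is a uniform structural proof that exhibits the curve explicitly from $I_Z$; what the paper's buys is brevity by leaning on \cite{HSS}. Your bookkeeping does check out against the generic Betti tables: for type I with $c=0$ one has $t=2s+3$ with $s+1$ generators of degree $s$ and $s$ of degree $s+2$, so the high-high entries have degree $2s+4-t<0$ and the corner vanishes, while $B$ is $(s+1)\times s$ with linear entries; for type II with $c=1$ the matrix $B$ acquires one quadratic column, consistent with the curve h-vector $\{1,2,\ldots,s,1\}$.

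Two points need more care than your sketch gives them. First, the proposition is asserted for an \emph{arbitrary} $Z$ with the given h-vector, and the graded Betti numbers of such a $Z$ are only bounded below by $h$: a special $Z$ could a priori carry cancelling generator/relation pairs (Diesel's theorem describes which degree sequences actually occur), so your block analysis must either be redone for those sequences or they must be ruled out. Second, Hilbert--Burch requires the maximal minors of $B$ to generate an ideal of codimension exactly $2$; this is not automatic from $I_C\subset I_Z$ and needs a short separate argument. Neither issue looks fatal---and the paper's own dimension-count sketch arguably establishes the statement only for a general $Z$ in any case---but a complete write-up along your lines should address both.
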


We sketch an alternative proof:

\begin{proof}For the cases with $c=0$ this is a consequence of \cite{HSS} 3.4 (c). For type II with $c=1$ we use induction on $s$.
Using results 5.3 and 5.5 of \cite{HSS} we compare $h$ to the h-vector $h'$ defined there, which is the same thing with $s$ replaced by $s-1$, and we compute that the dimension fo $\CCH_{h}$ is equal to the dimension of the family of those $Z\sim mH-K$ on a $C$, computed as $\dim \CH_{h_{C}}+\dim_{C}|mH-K|$. The induction starts with $h= \{1,3,4,4,3,1\}$, where the corresponding scheme $Z$ is a complete intersection of type $(2,2,3)$ and the result is obvious.
\end{proof}

\begin{remark} In some cases the projection $\CCH_{X \subset G} \to \CH_X$ is finite.  This happens for the following degrees $d$ and h-vectors of $G$. \medskip

\begin{tabular}{|r|l|}\hline
degree & h-vector\cr \hline
7  & \{1,3,3,1\} \cr
17 & \{1,3,6,7,6,3,1\} \cr
21 & \{1,3,6,10,6,3,1\} \cr
25 & \{ 1,3,6,10,10,6,3,1\} \cr
29 &  \{1,3,6,10,12,10,6,3,1\} \cr
32&  \{1,3,6,10,12,12,10,6,3,1\}  \cr
33 & \{1,3,6,10,15,10,6,3,1\} \cr
38 & \{1,3,6,10,15,15,10,6,3,1\} \cr
45 & \{1,3,6,10,15,19,15,10,6,3,1\} \cr\hline
\end{tabular} 
\medskip

\noindent
It would be interesting to compute the degree of the projection in these cases.
When $G$ is a complete intersection the projection is one-to-one.
\end{remark}

\begin{corollary}\label{main} A general collection of  $d$ points in $\PP^3$
over an algebraically closed field of characteristic zero is glicci if $1\leq d\leq 33$ or $d=37$ or 38.
\end{corollary}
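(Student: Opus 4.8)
The plan is to assemble the final corollary from the graph-theoretic framework already established, treating it as a bookkeeping consequence of the bi-dominance results rather than as a fresh geometric argument. The key principle, recalled in Section~\ref{section:sketch}, is that if $\CCH_{X\cup Y=G}$ is bi-dominant and a general point of $\CH_e$ is glicci, then a general point of $\CH_d$ is glicci as well. Glicci-ness therefore propagates along the edges of the graph in Figure~\ref{Fig1-1}, and so a general set of $d$ points is glicci precisely when the vertex $d$ lies in the connected component containing the vertex $1$. The whole corollary reduces to the combinatorial claim that the set $\{1,2,\dots,33,37,38\}$ is exactly the connected component of $1$.

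First I would fix the base case: a single point, or more generally a complete intersection, is glicci by definition, so the vertex $1$ is glicci and every vertex reachable from it inherits this. The engine that produces edges is the previous theorem, which certifies that the bi-dominant correspondences are exactly those drawn in Figure~\ref{Fig1-1}; each such edge $d$---$e$ is witnessed by an explicit reduced Gorenstein scheme $G$ over a finite field whose two residual pieces $X,Y$ pass the smoothness-and-dominance tests of Corollary~\ref{cordom}, with the lift to characteristic zero justified by viewing $G$ as a reduction mod $p$ of a scheme over a number field. Second I would invoke the irreducibility of $\CH_d$ (noted after Lemma~\ref{defoLemma}): because each $\CH_d$ is irreducible, ``a general point is glicci'' is a well-defined transferable property, and a single bi-dominant edge suffices to carry it between the two vertices in either direction.

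With the edge set in hand, the proof is finished by traversing the graph: starting from $1$ and following the certified edges, one checks that every integer from $1$ through $33$, together with $37$ and $38$, is reached, while $34,35,36$ and everything from $39$ onward up to the cutoff $47$ fail to connect (and beyond $47$ no bi-dominant correspondence exists at all, by the finiteness corollary bounding $s\le 5$ and $d\le 47$). This is a finite verification already carried out by the package \cite{ES-glicci} and displayed in the figure, so at the level of this write-up it amounts to reading off the connected component of $1$.

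The main obstacle is not in this corollary itself, which is essentially a traversal of an already-constructed graph, but in what it rests upon: the honesty of each edge depends on the earlier certification that a genuinely \emph{reduced} Gorenstein $G$ with the prescribed h-vector and two generic-Hilbert-function residuals actually exists over the finite field, and that the tangent-space dimension computations of step~3 in the preceding theorem really yield the equality demanded by Corollary~\ref{cordom}. Thus the delicate point I would flag is the lift from characteristic $p$ to characteristic zero: one must argue that passing the open conditions (reducedness of $G$, genericity of the Hilbert functions of $X$ and $Y$, and the dominance equalities) over $\FF_p$ guarantees the same over an algebraically closed field of characteristic zero, via spreading out over a number field and semicontinuity. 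Granting that lift, which the cited theorem supplies, the corollary follows immediately.
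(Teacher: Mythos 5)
Your proposal is correct and matches the paper's own proof, which likewise deduces the corollary by propagating glicci-ness along the bi-dominant edges of Figure~\ref{Fig1-1} and observing that the listed degrees form the connected component of the vertex $1$. The extra points you flag (irreducibility of $\CH_d$, the lift from $\FF_p$ to characteristic zero) are indeed the load-bearing inputs, but the paper places them in the preceding theorem and in Section~\ref{section:sketch} rather than in this corollary's proof.
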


\begin{proof} Since the correspondences are bi-dominant, a general collection of $d$ points will be Gorenstein linked to a general collection of degree $e$. Thus we may repeat, and the result follows, because these degrees form a connected component of the graph in Figure \ref{Fig1-1}.
\end{proof}

\section{Strict Gorenstein Linkage}

One way to obtain an arithmetically Gorenstein (AG) subscheme of any projective space $\PP^{n}$ is to take an ACM subscheme $S$ satisfying the condition, called $G_{1}$, of being Gorenstein in codimension 1, and a divisor $X$ on it that is linearly equivalent to $mH-K$, where $H$ is the hyperplane class and $K$ is the canonical divisor of $S$ (see \cite{KMMNP}, 5.4). A slight variation of this construction allows one to reduce the condition $G_{1}$ to $G_{0}$ (Gorenstein in codimension 0; see \cite{Hartshorne2007}, 3.3.) A direct linkage  using one of these AG schemes is called a \emph{strict} direct Gorenstein link, and the equivalence relation generated by these is called \emph{strict Gorenstein linkage} \cite{Hartshorne2007}.

Nearly all of the  proofs in the literature that certain classes of schemes are glicci use this more restrictive notion of Gorenstein linkage. (See~\cite{Migliore1998} for a survey, and~\cite{Hartshorne2001, Hartshorne2002, Hartshorne2007, HMP2001, HMN2008,HSS,KMMNP} for some of the results)---in fact the one paper we are aware of that actually makes use of the general notion for this purpose is \cite{CDH} 7.1, which uses general Gorenstein linkages to show that any AG subscheme of $\PP^{n}$ is glicci. 

By contrast, some of the direct linkages established in this paper cannot be strict direct Gorenstein links.
We do not know whether such links can be achieved by a sequence of strict Gorenstein links; but one can show that if this is possible then some of the links must be to larger sets of points, and some of the intermediate sets of points must fail to be general.

\begin{proposition}\label{hartshorne1}
A general arithmetically Gorenstein scheme of 30 points in $\PP^{3}$ cannot be written as a divisor of the form $mH-K$
on any ACM curve  $C\subset \P^{3}$, where
$H$ is the hyperplane class and $K$ the canonical class of $C$. The linkages 20---10 and 21---9 in Figure 1 are not strict direct Gorenstein links.
\end{proposition}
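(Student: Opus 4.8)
The plan is to reduce the second assertion to the first and then rule out, by a dimension count, every ACM curve that could carry $G$ as a divisor $mH-K$. First I would record the relevant numerics. Both linkages $20$---$10$ and $21$---$9$ pass through an arithmetically Gorenstein scheme $G$ of degree $30$ whose (symmetric) h-vector is $\{1,3,6,10,6,3,1\}$; this is case I of Proposition~\ref{h-vectors} with $s=3$ and $c=4$, so the dimension formula $g(h)=4s(s+1)+4c-1$ gives $\dim \CCH_{G}=63$. Because the two correspondences are bi-dominant, a general member $G$ of either correspondence is a general point of $\CCH_{G}$; hence once the first sentence is proved for this h-vector, a strict direct Gorenstein link---which by definition requires writing $G$ as $mH-K$ on an ACM scheme of dimension one more, i.e. an ACM curve---is impossible, giving the second sentence.

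Next I would identify the only possible curve. From the partial sums of $h_{G}$ one reads $H^{0}(\CI_{G}(3))=0$ and $H^{0}(\CI_{G}(4))=9$, so any curve $C\supset G$ lies on no cubic surface. Assuming $G\sim mH-K$ on an ACM curve $C$, the standard relation between the h-vector of a divisor of type $mH-K$ and that of its supporting curve (the curve h-vector integrates once to the increasing half $\{1,3,6,10\}$ of $h_{G}$; cf. the divisor proposition above and \cite{HSS}) forces $h_{C}=\{1,2,3,4\}$, i.e. $\deg C=10$ and arithmetic genus $11$. The degree relation $\deg G=m\deg C-(2p_{a}-2)$ then reads $30=10m-20$, so $m=5$. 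Such a $C$ is resolved by $0\to S(-5)^{4}\to S(-4)^{5}\to I_{C}\to 0$, with a $5\times 4$ Hilbert--Burch matrix of linear forms; conceptually this is exactly what would force the $9\times 9$ Buchsbaum--Eisenbud matrix of $I_{G}$ to acquire a $5\times 5$ block of zeros, a degeneration that a general skew matrix of linear forms does not have. This is the structural reason behind the statement, parallel to the zero-block argument used in the classification behind Figure~\ref{Fig1-1}.

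I would then finish with the dimension count. The ACM curves with $h_{C}=\{1,2,3,4\}$ form an irreducible family of dimension $h^{0}(N_{C})=\chi(N_{C/\PP^{3}})=4\deg C=40$ (using $h^{1}(N_{C})=0$ for the general such $C$, and $\chi(N_{C/\PP^{3}})=4\deg C$). On a general $C$ the linear system $|5H-K|$ has, by Riemann--Roch, dimension $h^{0}(5H-K)-1=(\deg(5H-K)+1-p_{a})-1=19$, since $h^{1}(5H-K)=h^{0}(2K-5H)=0$ as $\deg(2K-5H)=-10<0$. Hence the incidence variety of pairs $(C,X)$ with $X\in|5H-K|$ has dimension at most $40+19=59<63=\dim\CCH_{G}$, so its image in $\CCH_{G}$ is a proper closed subvariety and the general $G$ is not of the form $mH-K$ on any ACM curve.

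The hard part will be the bookkeeping that makes the dimension bound genuinely uniform over all---including singular or non-reduced---ACM curves $C$: one must check that no special locus of curves carries a system $|5H-K|$ large enough to push the total dimension back up to $63$, which requires semicontinuity control of $h^{0}(5H-K)$ and $h^{0}(N_{C})$ across the whole family, together with a careful justification that $h_{C}=\{1,2,3,4\}$ and $m=5$ are the only numerical possibilities. Establishing this last uniqueness rigorously, rather than just for the generic member, is the step I expect to demand the most care.
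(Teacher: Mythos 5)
Your proposal follows essentially the same route as the paper's proof: identify the forced h-vector $\{1,3,6,10,6,3,1\}$ of $G$, deduce that a supporting ACM curve would have h-vector $\{1,2,3,4\}$ (degree $10$, genus $11$) with $m=5$, and bound the family of pairs $Z\subset C$ by $40+19=59$. The paper cites \cite{HSS} 6.8 for the number $59$ where you recompute it by Riemann--Roch, and for the linkage 21---9 it uses a shorter argument you don't need: a curve moving in a $40$-dimensional Hilbert scheme contains at most $20$ general points, so no $G$ containing $21$ general points lies on one at all.

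Two substantive points. First, the issue you flag at the end --- uniformity of the bound over singular and non-reduced ACM curves --- is a genuine gap in your write-up as it stands, since the first sentence of the Proposition quantifies over \emph{all} ACM curves and your Riemann--Roch computation of $h^{0}(5H-K)$ presupposes an integral $C$. The paper closes this not by semicontinuity bookkeeping but by a prior reduction: the incidence variety $\CH_{X\subset C}$ of pairs (set of $20$ points, ACM curve with $h_{C}=\{1,2,3,4\}$ containing it) has dimension $40+20=60=\dim\CH_{20}$, so a general $X$ lying on such a curve lies on a \emph{general}, hence smooth and integral, one; after that your generic computation suffices. Second, your reduction of the second sentence to the first ("a general member of the correspondence is a general point of $\CCH_{G}$, hence not of the form $mH-K$") only shows that the \emph{generic} link in the $63$-dimensional family is not strict; it does not by itself exclude that $20$ general points sit inside some \emph{special} AG scheme of the form $5H-K$ on a curve. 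The paper's comparison is against $\dim\CH_{20}=60$ rather than $\dim\CCH_{G}=63$: since the degree-$20$ subschemes of the members of a $59$-dimensional family of $Z$'s sweep out at most $59$ dimensions, no such $Z$ contains $20$ general points. You should make that the final inequality; it costs nothing and gives the statement actually claimed.
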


\begin{proof} The h-vector of a Gorenstein scheme $Z$ of 30 points, of which 20 or 21 are general, is necessarily $h=\{1,3,6,10,6,3,1\}$. If $Z$ lay on an ACM curve in the class $mH-K$, then the h-vector of the curve would be $\{1,2,3,4\}$ (\cite{HSS}, 3.1). This is a curve of
degree 10 and genus 11. The Hilbert scheme of such curves has dimension 40, so such a curve can contain at most 20 general points. On the other hand, our Theorem~\ref{main theorem} shows that there are Gorenstein schemes with h-vector h containing 21 general points. In particular, the linkage 21---9 is not strict.

If the link 20---10 were a strict Gorenstein link, then a set of 20 general points $X$ would lie in an AG scheme of 30 points in the class $5H-K$ on a curve $C$ as above. Since the Hilbert scheme of $X$ and the incidence correspondence $\CH_{X\subset C}$ both have dimension 60, a general $X$ would be contained in a general, and thus smooth and integral, curve $C$. But the family of pairs
$Z\subset C$ of this type has dimension only 59 (\cite{HSS}, 6.8) and thus there is no such  $Z$ containing 20 general points.
\end{proof}

Some of the direct Gorenstein links in Figure 1 can be obtained by direct strict Gorenstein links (for example, the cases $d\leq 19$ are treated in \cite{Hartshorne2002}. However, for $d= 20, \ 24\leq d\leq 33$, and $d = 37, 38$ this is not the case.

\bigskip

\vbox{\noindent Author Addresses:\par
\smallskip
\noindent{David Eisenbud}\par
\noindent{Department of Mathematics, University of California, Berkeley,
Berkeley CA 94720}\par
\noindent{eisenbud@math.berkeley.edu}\par
\smallskip
\noindent{Robin Hartshorne}\par
\noindent{Department of Mathematics, University of California, Berkeley,
Berkeley CA 94720}\par
\noindent{robin@math.berkeley.edu}\par
\smallskip
\noindent{Frank-Olaf Schreyer}\par
\noindent{Mathematik und Informatik, Universit\"at des Saarlandes, Campus E2 4, 
D-66123 Saarbr\"ucken, Germany}\par
\noindent{schreyer@math.uni-sb.de}\par
}

\end{document}